\newtheorem{thm}{Theorem}[section]
 \newtheorem{dfn}[thm]{Definition}
 \newtheorem{lem}[thm]{Lemma}
 \newtheorem{rem}[thm]{Remark}
\newtheorem{prop}[thm]{Proposition}
\newtheorem{asm}[thm]{Assumption}
\newtheorem{prp}[thm]{Proposition}
\numberwithin{equation}{section}
\newcommand{\cA}{\mathcal{A}}
\newcommand{\cC}{\mathcal{C}}
 \newcommand{\cD}{\mathcal{D}}
\newcommand{\cF}{\mathcal{F}}
\newcommand{\cL}{\mathcal{L}}
\newcommand{\cM}{\cD}
\newcommand{\cZ}{\mathcal{Z}}
\def \D{\mathbb{D}}
\def \E{\mathbb{E}}
\def \L{\mathbb{L}}
 \def \M{\mathbb{M}}
 \def\P{\mathbb{P}}
\def \Q{\mathbb{Q}}
\def \R{\mathbb{R}}
\def \Sb{\mathbb {S}}
\def\reff#1{{\rm(\ref{#1})}}
\begin{document}
\title{Martingale Optimal Transport\\ and \\ Robust Hedging  in Continuous Time}

 \thanks{Research partly supported by the
European Research Council under the grant 228053-FiRM,
 by the ETH Foundation and by the Swiss Finance Institute.
} \author{Yan Dolinsky \address{
 Department of Statistics, Hebrew University of Jerusalem, Israel. \hspace{10pt}
 {e.mail: yan.dolinsky@mail.huji.ac.il}}}
\author{H.Mete  Soner \address{
 Department of Mathematics, ETH Zurich \&
 Swiss Finance Institute. \hspace{10pt}
 {e.mail: hmsoner@ethz.ch}}\\
  ${}$\\
 Hebrew University of Jerusalem
 and  ETH Zurich}

\date{\today} \begin{abstract}
The duality between the robust
(or equivalently, model independent)
hedging of path dependent European options and a
martingale optimal transport problem is proved.
The financial market is modeled
through a risky asset whose price is only assumed to be a continuous
function of time. The hedging problem is to construct a minimal super-hedging 
portfolio that consists of dynamically trading the underlying risky asset and
a static position of  vanilla options which can be exercised at the given, fixed 
maturity. The dual is a Monge-Kantorovich type martingale transport
problem of maximizing the expected value of the option over all martingale 
measures that have a given marginal at maturity. In addition to duality, a
family of simple, piecewise constant super-replication portfolios that asymptotically 
achieve  the minimal super-replication cost is constructed.
\end{abstract}

\subjclass[2010]{91G10, 60G44}
 \keywords{European Options, Robust Hedging, Min--Max Theorems, Prokhorov Metric, Optimal transport}%

\maketitle \markboth{Y.Dolinsky and H.M.Soner}{Martingale Optimal Transport}
\renewcommand{\theequation}{\arabic{section}.\arabic{equation}}
\pagenumbering{arabic}

\section{Introduction}\label{sec:1}\setcounter{equation}{0} The original transport 
problem proposed by Monge \cite{M}
 is to optimally move   a pile of soil  to an excavation.
 Mathematically, given two measures
$\nu$ and $\mu$ of equal mass, we look for an optimal bijection of $\R^d$ which 
moves $\nu$ to $\mu$, i.e., look for a map $S$ so that $$ \int_{\R^d}
\varphi(S(x)) d\nu(x) = \int_{\R^d} \varphi(x) d\mu(x), $$ for all continuous functions 
$\varphi$. Then,  with a given cost function $c$, the objective is
to minimize $$ \int_{\R^d}\ c(x,S(x))\ d\nu(x) $$ over all bijections $S$.

In  his classical papers \cite{K,K1}, Kantorovich relaxed this problem by
 considering a probability measure on $\R^d \times \R^d$,
 whose marginals agree with $\nu$ and $\mu$,
 instead of a bijection.
 This generalization linearizes the problem.
Hence it allows for an easy existence result
 and enables one to identify its
 convex  dual.
 Indeed, the dual elements are real-valued
 continuous maps $(g, h)$ of $\R^d$
 satisfying the constraint
\begin{equation} \label{e.cons}
 g(x) + h(y) \le c(x,y).
\end{equation}
 The dual objective is
 to maximize
 $$
 \int_{\R^d} g(x) \ d\nu(x) +
  \int_{\R^d} h(y) \ d\mu(y)
$$ overall $(g, h)$ satisfying the constraint \reff{e.cons}. In the last decades an impressive theory has been developed and we refer the reader to
\cite{AP,V,V9} and to the references therein.

In robust hedging problems, we are  also given two measures. Namely, the initial 
and the final distributions of a stock process. We then construct an
optimal connection. In general, however, the cost functional depends on the whole 
path of this connection and not simply on the final value. Hence, one
needs to consider processes instead of simply the maps $S$. The probability 
distribution of this process has prescribed
 marginals at final
and initial times. Thus, it is in direct analogy with the Kantorovich measure. But, 
financial considerations restrict the process to be a martingale (see
Definition \ref{d.measure}). Interestingly, the dual also has a financial interpretation 
as a robust hedging (super-replication) problem.  Indeed, the
replication constraint is similar to \reff{e.cons}. The formal connection between the 
original Monge-Kantorovich problem and the financial problem is
further discussed in Remark \ref{r.analogy} and also in the papers \cite{BHLP} and 
\cite{GLT}.

We continue by describing the robust hedging problem. Consider a financial market 
consisting of one risky asset with a continuous price process. As in the
classical paper of Hobson  \cite{H}, all call options are liquid assets and can be 
traded for a ``reasonable" price that is known initially.  Hence, the
portfolio of an investor consists of static positions in the call options in addition to the 
usual dynamically updated risky asset. This leads us to a
similar structure to that in \cite{H} and in other papers
\cite{BHR,CL,CO,CO1,DH, DOR, GLT,H2,H3,H4,H5,H6,LT} which consider
model-independent pricing.  
This approach is very closely
related to path-wise proofs 
of well-known probabilistic inequalities
\cite{ABPST,CHO}.
Apart from the continuity of the price process no other 
model assumptions are placed on the dynamics of the price process.

In this market, we prove the Kantorovich duality, Theorem \ref{thm2.1}, and an approximation result, Theorem \ref{thm2.2}, for a general class of
path-dependent options. The classical duality theorem, for a market with a risky asset whose price process is a semi--martingale,
 states that the minimal super-replication
cost of a contingent claim is equal to the supremum of its expected value over all martingale measures that are equivalent to a given measure. We refer the
reader to Delbaen \& Schachermayer \cite{ds} (Theorem 5.7) for the case of general semi-martingale processes and to El-Karoui \& Quenez \cite{eq} for its
dynamic version in the diffusion case. Theorem \ref{thm2.1} below, also provides a dual representation of the minimal super-replication cost but for model
independent markets. The dual is given as  the supremum of the expectations of the contingent claim over all martingale measures with {\em a given marginal
at the maturity but with no dominating measure}.  Since no probabilistic model is pre-assumed for the price process, the class of all martingale measures
is quite large. Moreover, martingale measures
are typically orthogonal to each other. These  facts render the problem difficult. 

In the literature, there are two earlier results in this direction. In a purely discrete setup, a similar result was recently proved by Beiglb\"ock,
Henry-Labord\`ere and Penkner \cite{BHLP}. In their model, the investor is allowed to buy all call options at finitely many given maturities and the stock
is traded only at these possible maturities. In this paper, however,
 the stock
is traded in continuous time together with a static position in the calls with one maturity. In \cite{BHLP} the dual is recognized as a Monge-Kantorovich
type optimal transport for martingale measures
and  the main tool in \cite{BHLP} is a duality result from
optimal transport (see Theorem 2.14 in
\cite{Kel}).

In continuous time, Galichon, Henry-Labord\`ere and Touzi \cite{GLT} prove a
different duality and then use the dual to convert the problem to an optimal
control problem. There are two main differences between our result and the one
proved in \cite{GLT}. The duality result, Proposition 2.1 in \cite{GLT},
states that the minimal super-replication cost is given as the infimum over Lagrange
multipliers and supremum over martingale measures {\em without} the
final time constraint and the Lagrange multipliers are related to the constraint.  Also
the problem formulation is different. The model in \cite{GLT}
assumes a large class of possible martingale measures for the price process. The
duality is then proved by extending an earlier unconstrained result proved
in \cite{STZ}. As in the unconstrained model of \cite{DM,STZ,STZ1}, the super
replication is defined not path-wise but rather probabilistically through
quasi-sure inequalities. Namely, the super-replication cost is the minimal initial
wealth from which one can super-replicate the option almost surely
with respect to all measures in a given class. In general, these measures are not
dominated by one measure.  As already mentioned this is the main
difficulty and sets the current problem apart from the classical duality discussed
earlier. However, our duality result together with the results of
\cite{GLT} implies that these two approaches -- namely, robust hedging through the
path-wise definition of this paper and the quasi-sure definition of
\cite{DM,STZ,STZ1} yield the same value.  This is proved
 in Section \ref{s.quasi} below.

Our second result provides a class of portfolios which are managed on a finite
number of random times and asymptotically achieve the minimal
super-replication cost. This result may have practical implications allowing us to
numerically investigate
 the corresponding discrete hedges,
 but we relegate this to a future study.

Robust hedging has been an active research area over the past decade. The initial
paper of Hobson \cite{H} studies the case of the lookback option. The
connection to the Skorokhod embedding is also made in this paper and an explicit
solution for the minimal super-replication cost is obtained. 
This approach
is further developed by Brown, Hobson and Rogers \cite{BHR}, 
Cox and Obloj \cite{CO}, \cite{CW} and 
in several other papers, \cite{H1,H2,H3,H4,H5,H6}. We refer the reader to the excellent survey of Hobson \cite{H1} for robust
hedging and to Obloj \cite{O} for the
Skorokhod embedding problem.
In particular, the recent paper 
by Cox and Wang \cite{CW} provides
a discussion of various constructions of Root's solution of the Skorokhod
embedding.

A similar modeling
approach is applied to volatility options by Carr and Lee \cite{CL}. 
In a recent paper, Davis, Obloj and Raval \cite{DOR} considers
 the variance swaps in a market with finitely many put options. In particular,  in
\cite{DOR} the  class of admissible portfolios is enlarged and numerical evidence is obtained by analyzing the S\&P500 index options data.

As already mentioned above, the dual approach is used by 
Galichon, Henry-Labord\`ere and Touzi  \cite{GLT} 
and Henry-Labord\`ere and Touzi \cite{LT} as
well. In these papers, the duality provides a connection to stochastic optimal control 
which can then be used to compute the solution in a more systematic
manner.

The proof of the main results is done in four steps. The first step is to reduce the
problem to bounded claims. The second step is to represent
the original robust hedging problem as a limit of robust hedging problems which live
on a sequence of countable spaces. For these type of problems, robust
hedging is the same as classical hedging, under the right choice of a probability
measure. Thus we can apply the classical duality results for
super--hedging of European options on a given probability space. The third step is to
use the discrete structure and  apply a standard min--max theorem
(similar to the one used in \cite{BHLP}). The last step is to analyze the limit of the
obtained prices in the discrete time markets. We
combine methods from arbitrage--free pricing and limit theorems for stochastic
processes.

The paper is organized as follows. The main results are formulated in the next
section. In Section \ref{s.quasi}, the connection between the quasi sure
approach and ours is proved. The two sections that follow are devoted to the proof of one inequality which implies the main results. 
The final section discusses a possible extension.
\vspace{10pt}

\noindent {\bf{Acknowledgements.}} The authors would like to thank
Mathias Beiglb\"ock,
 David Belius, Alex Cox, Zhaoxu Hou, Jan Obloj,
 Walter Schachermayer and
 Nizar Touzi
for insightful discussions and comments.
In particular, Section \ref{s.quasi} resulted from discussions with
Obloj and Touzi and we are grateful to Hou, Obloj and Shachermayer for several
corrections.

\section{Preliminaries and main results} \label{sec2} \setcounter{equation}{0} The
financial market consists of
 a savings account
 which is normalized to unity
 $B_t\equiv 1$
by discounting and of a risky asset $S_t$, $t\in [0,T]$, where $T<\infty$ is the maturity
date. Let  $s:=S_0>0$ be the initial stock price and without loss
of generality, we set $s=1$. Denote by $\mathcal{C}^{+}[0,T]$ the set of all strictly
positive functions $f:[0,T]\rightarrow\mathbb{R}_{+}$ which satisfy
$f_0=1$. We assume that $S_t$ is a continuous process. Then, any element of
$\mathcal{C}^{+}[0,T]$ can be a possible path for the stock price process $S$.
Let us emphasize that this the only assumption that we make on our financial market.

Denote by $\cD[0,T]$ the space of all measurable functions $\upsilon:[0,T]\rightarrow
\mathbb{R}$ with the norm $||\upsilon||=\sup_{0\leq t \leq
T}|\upsilon_t|$. Let $G:\cD[0,T]\rightarrow \mathbb{R}$ be a given deterministic map.
We then consider a path dependent European option with the payoff
\begin{equation}\label{2.1} X=G(S), \end{equation} where $S$ is viewed as an
element in $\cD[0,T]$. \vspace{10pt}

\subsection{An assumption on the claim}

Since our proof is through an approximation argument, we need
the  regularity of the pay-off functional $G$.  Indeed, we first approximate
the stock price process by piece-wise constant  functions
taking values in a finite set.  We also discretize the jump
times to obtain a countable set of possible price processes.
This process necessitates a continuity assumption
with respect to a Skorokhod type topology.
Further discussion of this assumption is given in Remark
\ref{rem2.1}. In particular,  Asian and lookback type options satisfy the below
condition. A possible generalization of our result
to more general class of pay-offs is discussed in
the final section
\ref{s.extension}.

Let $\cD_N[0,T]$ be the subset of
$\cD[0,T]$ that are piece-wise constant functions with $N$ possible jumps i.e., $v \in
\cD_N[0,T]$ if and only if there exists a partition $ t_0=0 < t_1 <t_2<\ldots<t_N \le  T$
such that
$$
v_t= \sum_{i=1}^{N}\ v_i \chi_{[t_{i-1},t_i)}(t) +
v_{N+1}\chi_{[t_{N},T]}(t) , \ \ {\mbox{where}}\ \  v_i:= v_{t_{i-1}},
$$
and we set
$\chi_A$
be the characteristic function of the set $A$. We make
the following standing assumption on $G$.

\begin{asm}
\label{asm2.1}
There exists a constant $L>0$ so that
$$
|G(\omega)-
G(\tilde\omega)|\leq L \|\omega-\tilde\omega\|,  \ \ \omega,\tilde\omega\in
\cD[0,T],
$$
where as before, $||\cdot||$ is the $\sup$ norm.

Moreover, let $\upsilon, \tilde\upsilon \in\cM_N[0,T]$ be such that $\upsilon_{i}=\tilde
\upsilon_{i}$ for all $i=1,...,N$. Then, $$
|G(\upsilon)-G(\tilde\upsilon)| \leq L \|\upsilon\|\sum_{k=1}^{N}|\Delta t_k-\Delta \tilde
 t_k|, $$ where as usual $\Delta t_k := t_k- t_{k-1}$ and $\Delta
 \tilde t_k := \tilde t_k- \tilde t_{k-1}$. \end{asm}

\begin{rem}\label{rem2.1} {\rm{ In our setup, the process $S$ represents the
discounted stock price and $G(S)$ represents the discounted reward. Let $r>0$
be the constant interest rate. Then, the payoff $$ G(S):=e^{-rT}H\left(e^{rT}S_T,
\min_{0\leq t\leq T} e^{rt}S_t, \max_{0\leq t\leq T} e^{rt}S_t,\int_{0}^T
e^{rt} S_t dt\right), $$ with  a Lipschitz continuous function
$H:\mathbb{R}^4\rightarrow\mathbb{R}$ satisfies the above assumption.

The above condition on $G$ is, in fact, a Lipschitz assumption with respect to a
 metric very similar to the Skorokhod one. However, it  is {\em{weaker}}
than to assume Lipschitz continuity with respect to the Skorokhod metric. Recall that
this classical metric is given by
$$
d(f,g):=\inf_{\lambda}\sup_{0\leq t\leq T} \max\left(|f(t)-g(\lambda(t))|,|\lambda(t)-t|
\right),
$$
where the infimum is taken over all time changes. A
time change is a strictly increasing continuous function which satisfy $\lambda(0)=0$
and $\lambda(T)=T$. We refer the reader to  Chapter 3 in \cite{B} for
more information. In particular, while  $\int_{0}^T S_t dt$ is continuous with respect to
the Skorokhod metric in $\cC[0,T]$, it is not Lipschitz
continuous in $\cC[0,T]$ and it is not even continuous in $\cD[0,T]$. Although we
assume $S$ to be continuous, since in our analysis we need to consider
approximations in $\cD[0,T]$, the above assumption is needed in order to include
Asian options.

Moreover, from our proof of the main results it can be shown that Theorems
\ref{thm2.1} and \ref{thm2.2} can be extended to payoffs of the form
$$
e^{-rT}H\left(e^{r t_1}S_{t_1},..., e^{r t_k} S_{t_k}, \min_{0\leq t\leq T}
e^{rt} S_t, \max_{0\leq t\leq T}e^{rt}S_t, \int_{0}^T e^{rt} S_t dt\right)
$$
where $H$ is Lipschitz and $0<t_1<...<t_k\leq T$. \qed}} \end{rem}

\vspace{10pt}

\subsection{European Calls} \label{ss.call}

We assume that, 
at time zero, the
investor is able to buy any call option with strike $K\ge 0$, for the
price
\begin{equation}
\label{e.c}
C(K):= \int \left(x-K\right )^{+}d\mu(x),
\end{equation}
where $\mu$ is a given probability measure on $\mathbb{R}_{+}$. 
The  measure $\mu$ is
assumed to be derived from observed call prices that are liquidly
traded in the market. One may also think of $\mu$ as describing the probabilistic
belief (in the market) about the stock price distribution at time $T$.
Then, an approximation argument implies that the price of a derivative security with
the payoff $g(S_T)$ with a bounded, measurable $g$ must be given by
$\int g d\mu$. We then assume that this formula also holds for all
$g \in \L^1(\R_+,\mu)$.

In particular, $C(0)= \int x d\mu(x)$.  On the other hand
the pay-off $C(0)$ is one stock. Hence, 
the value of $C(0)$ must be equal to the initial stock price
$S_0$ which is normalized to one. Therefore, although the
probability measure $\mu$ is quite general,
in view of our assumption \reff{e.c} and arbitrage considerations,
it should satisfy 
\begin{equation} 
\label{2.2}
C(0)=\int xd\mu(x)=S_0=1. 
\end{equation} 
For technical reasons, we also assume
that there exists $p>1$ such that
\begin{equation}
\label{2.3} \int x^p
d\mu(x)<\infty.
\end{equation}
\vspace{10pt}

\subsection{Admissible portfolios}
\label{ss.admissible}

We continue by describing the continuous time trading in the underlying asset $S$.
Since we do not assume any semi-martingale structure of the risky asset,
this question is nontrivial. We adopt the path-wise approach and require
that the trading strategy (in the risky asset) is of finite variation.
Then, for any function $h:[0,T]\rightarrow \mathbb{R}$ of finite variation and
continuous function $S \in \cC[0,T]$, we use integration by parts to define
$$ \int_{0}^t h_u dS_u:= h_t S_t-h_0S_0-\int_{0}^t S_udh_u, $$ where the last term
in the above right hand side is the standard Stieltjes integral.

We are now ready to give the definition of 
semi-static portfolios and super-hedging.
Recall  the exponent $p$ in (\ref{2.3}).

\begin{dfn}
\label{dfn2.1}

{\rm{1.  We say that a map $$ \phi : A \subset \cD[0,T]\rightarrow\cD[0,T] $$ is}}
progressively measurable, {\rm{if for any $v, \tilde v \in A$,
\begin{equation} \label{e.adapted} v_u=\tilde{v}_u, \ \ \forall u \in[0,t] \ \ \Rightarrow \ \
\phi(v)_t=\phi(\tilde v)_t. \end{equation}

\noindent 2. A}} semi-static portfolio {\rm{is a pair $\pi:=(g,\gamma)$, where $g\in
\L^1(\R_+,\mu)$ and
$$
\gamma:\mathcal{C}^{+}[0,T]\rightarrow\cD[0,T]
$$
is a progressively measurable map of bounded variation.

\noindent 3. The corresponding }}discounted portfolio value {\rm{is given by,
$$
Z^{\pi}_t(S)=g(S_T)\chi_{\{t=T\}}+\int_{0}^t \gamma_u( S) d{S}_u, \ \ t\in
[0,T],
$$
where $\chi_{A}$ is the indicator of the set $A$. A semi-static
portfolio is}} admissible, {\rm{if there exists $M>0$ such that
\begin{equation} \label{2.6+} Z^\pi_t(S)\geq -M \left(1+\sup_{0\leq u\leq t}S^p_u
\right), \ \ \ \forall t\in [0,T], \ \ S\in\mathcal{C}^{+}[0,T].
\end{equation}

\noindent 4. An admissible semi-static portfolio is called}} super-replicating, {\rm{if
$$
Z^\pi_T(S)\geq  G(S), \ \ \forall{S}\in \mathcal{C}^{+}[0,T].
$$
Namely, we require that for any possible value of the stock process, the portfolio
value at maturity will be no less that the reward of the European
claim.

\noindent 5. The (minimal)}} super-hedging cost {\rm{of $G$ is  defined by,}}
\begin{equation*}
V(G):=\inf\left\{\int g d\mu: \ \exists \gamma \
\mbox{such}  \ \mbox{that} \ \pi:=(g,\gamma) \ \mbox{is} \ \mbox{super-replicating} \ \right\}.
\end{equation*}
\qed
\end{dfn}

Notice that the set of admissible portfolios depends on the exponent $p$ which appears in the assumption \reff{2.3}. We suppress this possible dependence
to simplify the exposition. \vspace{10pt}

\subsection{Martingale optimal transport}

Since the dual formula refers to a probabilistic structure, we need to introduce that structure as well. Set $\Omega:=\mathcal{C}^+[0,T]$ and let $\mathbb
S=(\mathbb S_t)_{0\leq t\leq T}$ be the canonical process given by $\mathbb S_t(\omega):=\omega_t$, for all $\omega\in\Omega$. Let $\cF_t:=\sigma(\mathbb
S_s,\, 0\leq s\leq t)$ be the canonical filtration (which is not right continuous).

The following class of probability measures are central to our results. Recall that we have normalized the stock prices to have initial value one.
Therefore, the probability measures introduced below need to satisfy this condition as well.

\begin{dfn} \label{d.measure} {\rm{

A probability measure $\Q$ on the space $(\Omega,{\mathcal F})$ is a}} martingale measure,
 {\rm{if
the canonical process $({\mathbb S_t})_{t=0}^T$ is a local martingale with respect to $\Q$ and $\mathbb S_0=1$ $\Q$-a.s.

For a probability measure $\mu$ on $\R_+$, $\mathbb{M}_\mu$ is the set of all martingale measures $\Q$ such that the probability distribution of
$\mathbb S_T$ under $\Q$ is equal to $\mu$}}. \qed \end{dfn}

Note that if $\mu$ satisfies \reff{2.2}, then the canonical process $({\mathbb S_t})_{t=0}^T$ is a {\em martingale} (not only a local martingale) under any
measure $\Q \in \M_\mu$. Indeed, a strict local martingale satisfies
$$
1=\mathbb S_0>\mathbb{E}_{\Q}[ \mathbb{S}_T]=\int x d\mu(x),
$$
and it would be  in
contradiction with \reff{2.2}. We use $\mathbb{E}_{\Q}$ 
to denote the expectation with respect to $\Q$.

\begin{rem}
{\rm{ Observe that (\ref{2.2}) yields that the set $\mathbb{M}_\mu$ is not
empty. Indeed, consider a complete probability space
$(\Omega^W,\mathcal{F}^W,P^W)$ together with a standard one--dimensional
Brownian motion $(W_t)_{t=0}^\infty$, and the natural filtration $\mathcal{F}^W_t$
which is the completion of $\sigma{\{W_s|s\leq{t}\}}$. Then, there exists a function $f:
\mathbb{R}\rightarrow \mathbb{R}_{+}$ such that the probability
distribution of $f(W_T)$ is equal to $\mu$. Define the martingale $M_t:=E^W(f(W_T)|
\mathcal{F}^W_t)$, $t\in [0,T]$. In view of  (\ref{2.2}),  $M_0=1$.
Since $M$ is a Brownian martingale,  it is continuous. Moreover, since $\mu$ has
support on the positive real line,  $f \ge 0$ and consequently,  $M \ge
0$. Then, the distribution of $M$ on the space $\Omega$ is an element in $
\mathbb{M}_\mu$. \qed}} \end{rem}

\begin{rem}\label{arbitrage}{\rm{
Clearly the duality is very closely related to fundamental
theorem of asset pricing, which states the
existence of a  measure $\Q\in\M_\mu$.
Since, as shown in the above remark such measures exist under
our set of assumptions, the market considered
in this paper is arbitrage-free.  Then, a
natural question that arises is whether our assumptions
on the option prices and the measure $\mu$
can be replaced by the assumption of
no-arbitrage. We do not address this very
interesting question in this paper.
However, several recent papers \cite{ABS,BN}
study this question in discrete time. \qed}}
\end{rem}

The following is the main result of the paper. 
An outline of its proof is given in subsection \ref{ss.proof}, below.

\begin{thm}\label{thm2.1} 
Assume that the European claim $G$ satisfies the Assumption \ref{asm2.1} and the probability measure $\mu$ satisfies \reff{2.2}
and \reff{2.3}. Then, the minimal super-hedging cost is given by
$$
V(G)=\sup_{\Q\in\mathbb{M}_\mu} \mathbb{E}_{\mathbb {Q}}\left[G(\mathbb
S)\right].
$$
\end{thm}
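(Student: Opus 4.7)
First, I would verify the easy, weak-duality inequality $V(G) \ge \sup_{\Q \in \M_\mu} \E_\Q[G(\Sb)]$. Fix any admissible semi-static portfolio $\pi = (g,\gamma)$ super-replicating $G$ and any $\Q \in \M_\mu$. Because $\gamma$ is progressively measurable and of finite variation, the pathwise Stieltjes integral $M_t := \int_0^t \gamma_u(\Sb)\,d\Sb_u$ (defined via integration by parts) coincides $\Q$-a.s.\ with the stochastic integral of $\gamma$ against the continuous $\Q$-martingale $\Sb$, hence is a local $\Q$-martingale; the admissibility lower bound \reff{2.6+}, together with the moment condition \reff{2.3} on $\mu$ and Doob's maximal inequality applied to $\Sb$, shows that the lower-bounding process is $\Q$-integrable, so $M$ is a $\Q$-supermartingale. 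Taking expectations of $G(\Sb)\le g(\Sb_T)+M_T$ then yields $\E_\Q[G(\Sb)] \le \int g\,d\mu$, and the infimum over $\pi$ gives the bound.

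For the hard direction $V(G) \le \sup_{\Q\in\M_\mu} \E_\Q[G(\Sb)]$, I would follow the four-step programme outlined in the introduction. Step~1 reduces the problem to bounded payoffs by truncating $G$ at level $N$; the Lipschitz part of Assumption \ref{asm2.1}, the moment bound \reff{2.3}, and Doob's inequality give the error estimates on both sides of the proposed duality uniformly as $N\to\infty$. Step~2 replaces the continuum path space $\cC^+[0,T]$ by an increasing sequence of countable sets of piecewise constant paths in $\cD_N[0,T]$ whose values lie in a finite grid and whose jump times are drawn from a dense countable subset of $[0,T]$. The second clause of Assumption \ref{asm2.1}, which controls $G$ under perturbations of the jump times, is precisely what makes this discretization compatible with the continuity of $G$, and yields an approximating super-replication problem $V_n(G)$ whose value is close to $V(G)$.

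Step~3 treats the fully discrete problem. On the $n$-th countable path space I would equip every admissible trajectory with strictly positive mass under an auxiliary reference measure, which dominates every candidate martingale measure at the discrete level, and apply the classical super-replication duality on that probability space; a min--max argument in the spirit of \cite{BHLP} then uses the static calls and the pricing rule \reff{e.c} to convert the resulting duality into a supremum over discrete martingale measures with terminal marginal $\mu$. Step~4 is the limiting procedure: given an $n$-optimal measure $\Q_n$ from Step~3, one shows the family $\{\Q_n\}$ is tight on $\Omega=\cC^+[0,T]$ (using the fixed marginal $\mu$, the martingale property, and Doob's inequality to control oscillations), extracts a weakly convergent subsequence, identifies the limit as a member of $\M_\mu$, and uses the continuity of $G$ afforded by Assumption \ref{asm2.1} to pass the expectation through, yielding the required inequality.

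I expect Step~4 to be the main technical obstacle. One must simultaneously preserve, without any dominating measure, (i) the martingale property of the canonical process in the weak limit and (ii) the marginal-at-$T$ constraint $\Sb_T\sim\mu$, both of which can be destroyed by weak convergence alone if mass escapes to infinity. The uniform integrability furnished by the $p$-th moment condition \reff{2.3}, combined with the Skorokhod-type structure built into the approximation of Step~2, is what one must lean on to close both points. A secondary subtlety is reconciling the pathwise finite-variation integral of Definition \ref{dfn2.1} with discrete stock dynamics whose paths have jumps, and passing the pathwise super-replication inequality from the discrete to the continuous model; this is exactly why Assumption \ref{asm2.1} is phrased in its slightly unusual Skorokhod-like form rather than as plain sup-norm Lipschitz continuity.
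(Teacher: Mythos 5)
Your weak-duality paragraph and Steps 1--3 track the paper's own strategy closely: the paper also reduces to bounded claims (Lemmas \ref{l.bound} and \ref{l.bounded}, using the explicit path-wise Doob hedge from \cite{ABPST}), passes to the countable piecewise-constant class $\D^{(N)}$ where a reference measure $\hat\P^{(N)}$ charges every path, and invokes Delbaen--Schachermayer duality together with a min--max theorem (Strasser) to obtain $\hat V_N(G)\le\sup_{\Q\in\M^{(K)}_N}\E_\Q[G(\hat\Sb)]$. Up to this point your outline is essentially the paper's.

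Step 4 is where you diverge, and this is not merely a stylistic difference: the paper \emph{explicitly declines} the tightness/weak-compactness route you propose. Immediately before Proposition \ref{lem4.2} the authors write that they do ``not'' argue by compactness; instead, given any $\hat\Q\in\M^{(K)}_N$ they construct, on an auxiliary Wiener space, a continuous martingale $Z$ and a sequence of stopping times matching the joint law of $(\tau_1,\dots,\tau_m,\hat\Sb_{\tau_1},\dots,\hat\Sb_{\tau_m})$ under $\hat\Q$, then modify $Z$ (via a Skorokhod representation / Prokhorov-metric argument and a second conditioning step) to a martingale $\Gamma$ whose terminal law is \emph{exactly} $\mu$, and finally bound $\E_{\hat\Q}[G(\hat\Sb)]-\E^W[G(\Gamma)]$ by an explicit error $f_K(\epsilon,N)$ that vanishes. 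Your tightness plan faces two concrete obstacles that this construction is designed to sidestep. First, the discrete optimizers live on $\D[0,T]$ (piecewise-constant c\`adl\`ag paths), not on $\Omega=\cC^+[0,T]$; one would have to argue in the Skorokhod topology and then show the weak limit charges only continuous paths, and the limit's terminal marginal is only approximately $\mu$ (via \reff{3.18+} and $\mu^{(N)}\Rightarrow\mu$), so it must be corrected to land in $\M_\mu$. Second, and more seriously, Assumption \ref{asm2.1} does \emph{not} give Skorokhod continuity of $G$ --- Remark \ref{rem2.1} points out that even $\int_0^T S_t\,dt$ is not continuous in $\cD[0,T]$ --- so ``using the continuity of $G$ to pass the expectation through'' weak limits on $\cD[0,T]$ is exactly the step that is not justified by the standing assumption. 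The paper's coupling of $\hat\Sb$, $Z$, $\hat Z$ and $\Gamma$ on a single probability space lets it compare $G$ at nearby paths using only the sup-norm and time-shift bounds of Assumption \ref{asm2.1}, which is the whole reason that assumption is phrased as it is. Until you specify how to bypass the Skorokhod-continuity issue and how to upgrade the approximate marginal to an exact one, Step 4 of your proposal has a genuine gap.
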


\begin{rem}\label{example}{\rm{
The above theorem provides a
duality result for the robust semi-static hedging
of a general pay-off.
Many specific
examples have been considered in the literature.
Indeed, the initial paper
of Hobson \cite{H} explicitly provides the hedge
for a lookback option.
Similarly,
using the random time change and Skorokhod
embedding method,
\cite{BHR,CL,CO}
and several other papers
analyze barrier options, lookback options and volatility options.
Also the path-wise
proof of the Doob's maximal inequality
given in \cite{ABPST} constructs an explicit
portfolio which robustly hedges
the power of the running maximum.
We use this  hedge in the proof
of Lemma \ref{l.bound} as well.
\qed}}
\end{rem}

\begin{rem}
\label{r.analogy}
{\rm{ One may consider the maximizer, if exists, of the expression
$$
\sup_{\Q\in\mathbb{M}_\mu}
 \mathbb{E}_{\mathbb{Q}}\left[G(\mathbb S)\right],
 $$
 as the {\em optimal transport} of the initial probability measure $\nu=\delta_{\{1\}}$ to
 the final distribution $\mu$.
However, an additional constraint that the connection is a martingale is imposed.
This in turn places a restriction on the measures, namely \reff{2.2}. The
penalty function $c$ is replaced by a more general functional $G$. In this context,
one may also consider general initial distributions $\nu$ rather than
Dirac measures.  Then, the martingale measures with given marginals  corresponds
to the  Kantorovich generalization of the mass transport problem.

The super-replication problem is also analogous to the {\em Kantorovich dual}.
However, the dual elements reflect the fact that the cost functional depends
on the whole path of the connection.

The reader may also consult \cite{BHLP} for a very clear discussion of the connection
between robust hedging and optimal transport.
\qed}}
\end{rem}

\vspace{10pt}

\subsection{A discrete time approximation}
 \label{ss.app}
 Next we construct a special class of simple strategies which achieve asymptotically
 the super--hedging cost $V$.

For a positive integer $N$ and{ any $S\in\mathcal{C}^{+}[0,T]$, set
$\tau^{(N)}_0( S)=0$. Then, recursively define
\begin{equation}
\label{e.tau}
\tau^{(N)}_k(S)=\inf\left\{t>\tau^{(N)}_{k-1}(S): |S_t-S_{\tau^{(N)}_{k-1}( S)}|
=\frac{1}{N}
\right\}\wedge T,
\end{equation}
where we set
$\tau^{(N)}_k(S)=T$, when the above set is empty. Also, define
\begin{equation}
\label{e.H}
H^{(N)}(S)=\min\{k\in\mathbb{N}:\tau^{(N)}_k(S)=T\}.
\end{equation}
Observe that for any $ S\in\mathcal{C}^{+}[0,T]$, $H^{(N)}( S)<\infty$.

Denote by $\mathcal{A}_N$ the set of all portfolios for which the trading in the stock
occurs only at the moments
$0=\tau^{(N)}_0(S)<\tau^{(N)}_1(S)<...<\tau^{(N)}_{H^{(N)}(S)}(S)=T$. Formally,
$\pi:=(g,\gamma)\in\mathcal{A}_{N}$, if it is progressively measurable in
the sense of \reff{e.adapted} and it is of the form
$$
\gamma_t(S) =
\sum_{k=0}^{H^{(N)}(S)-1} \gamma_k(S) \chi_{(\tau^{(N)}_k( S),
\tau^{(N)}_{k+1}(S)]}(t),
$$
for some $\gamma_k(S)$'s. Note that, $\gamma_k(S)$ can depend on $S$ only
through its values up to time $\tau^{(N)}_k( S)$, so that $\gamma_t$
is progressively measurable. Set

$$
V_{N}(G):=\inf\left\{\int g d\mu:  \ \exists \gamma \
 \mbox{such that} \ \pi:=(g,\gamma)\in\mathcal{A}_{N}\
\mbox{is super-replicating}\right\}.
$$

It is clear that for any integer $k \ge 1$, $V_N(G) \geq V_{kN}(G) \geq V(G)$. The
following result proves the convergence to $V(G)$.  This approximation
result  is the second main result of this paper.  Also, it is the key analytical step  in the
proof of duality.

\begin{thm}
\label{thm2.2}
Under the assumptions of Theorem \ref{thm2.1},
$$
\lim_{N\rightarrow\infty}V_{N}(G)=V(G).
$$
\end{thm}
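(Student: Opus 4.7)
The inequality $V_N(G)\ge V(G)$ is immediate since $\cA_N$ is a subclass of all admissible semi-static portfolios. Moreover, the weak duality bound $V(G)\ge \overline V(G):=\sup_{\Q\in\M_\mu}\E_\Q[G(\mathbb S)]$ follows by taking $\Q$-expectations of the super-replication inequality and observing that the bounded-variation integral $\int_0^T\gamma_u\,d\mathbb S_u$ is a $\Q$-supermartingale under any $\Q\in\M_\mu$, thanks to the lower bound (\ref{2.6+}) and the $p$-th moment bound (\ref{2.3}). It therefore suffices to establish the single inequality $\limsup_N V_N(G)\le \overline V(G)$, from which both Theorem~\ref{thm2.2} and Theorem~\ref{thm2.1} follow simultaneously.

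The first reduction is to bounded $G$. Using a pathwise Doob-type inequality to super-replicate $\sup_{0\le t\le T}\mathbb S_t^{p}$ by a static position in a call option plus a simple finite-variation hedge (the ingredient behind Lemma~\ref{l.bound} alluded to in Remark~\ref{example}), together with (\ref{2.3}), one truncates $G$ at a level $R$ and absorbs the tail at a cost that is uniform in $N$ and vanishes as $R\to\infty$. Henceforth I assume $|G|\le R$.

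Next, fix $N$ and pass to a countable-state discrete model. Any strategy in $\cA_N$ depends on $S$ only through the jump-discretized path $S^{(N)}$ defined by $S^{(N)}_t=S_{\tau^{(N)}_k(S)}$ on $[\tau^{(N)}_k,\tau^{(N)}_{k+1})$, so $V_N(G)$ is the minimal super-replication cost of $G(S^{(N)})$ with trading only at the times $\tau^{(N)}_k$. Such a path is encoded by a finite $\pm 1/N$ walk on the lattice $\{1+k/N:k\in\Z\}$ together with the jump times. After projecting these jump times onto an auxiliary fine time-grid, the second half of Assumption~\ref{asm2.1} bounds the induced perturbation of $G$, so the resulting error in $V_N(G)$ is $o(1)$ as the grid is refined, while the state space becomes countable. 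On the resulting countable, discrete-time model the classical fundamental theorem of asset pricing applies, and a min--max argument in the spirit of \cite{BHLP} (via Kelley's theorem, Theorem~2.14 in \cite{Kel}) delivers
\begin{equation*}
V_N(G)\le \sup_{\Q\in\M^{(N)}_\mu}\E_\Q\bigl[G(\mathbb S^{(N)})\bigr]+\varepsilon_N,
\end{equation*}
where $\M^{(N)}_\mu$ is the set of discrete martingale measures with terminal marginal $\mu$ and $\varepsilon_N\to 0$.

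The main obstacle is the final limiting step. Take $\Q_N\in\M^{(N)}_\mu$ near-maximizing. Since the terminal marginal is fixed to $\mu$ with $\int x^p\,d\mu<\infty$, Doob's $L^p$ inequality yields $\sup_N \E_{\Q_N}\bigl[\sup_{t\le T}\mathbb S_t^{p}\bigr]<\infty$; combined with the uniform jump bound $|\Delta \mathbb S^{(N)}|\le 1/N$, an Aldous-type criterion gives tightness of $(\Q_N)$ on $\cD[0,T]$, with every subsequential weak limit $\Q$ supported on $\cC[0,T]$. The $L^p$ bound also provides the uniform integrability needed to pass the martingale property through the weak limit and to identify the terminal marginal of $\Q$ with $\mu$, so $\Q\in\M_\mu$. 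The Lipschitz hypothesis in Assumption~\ref{asm2.1}, carefully crafted to accommodate both uniform and jump-time perturbations, then gives $\E_{\Q_N}[G(\mathbb S^{(N)})]\to\E_\Q[G(\mathbb S)]$, whence $\limsup_N V_N(G)\le\E_\Q[G(\mathbb S)]\le\overline V(G)$. The genuine difficulty here is the joint control of tightness, continuity of the limit, and preservation of the terminal marginal; condition (\ref{2.3}) and the explicit $1/N$ bound on the discretized jumps are precisely what make that joint control work.
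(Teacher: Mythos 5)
Your high-level architecture --- reduce to bounded $G$, pass to a countable discrete model, apply a classical-duality plus min--max argument, then take $N\to\infty$ --- matches the paper's, and the first three steps are essentially the paper's Lemmas \ref{l.bounded}, \ref{lem3.1} and \ref{lem3.2} (though the min--max there is carried out with Theorem 45.8 of \cite{STR} rather than Kellerer's Theorem 2.14, and the discrete martingale measures have terminal law only $O(K/N)$-close to $\mu^{(N)}$ in total variation, not pinned to $\mu$).

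The final limiting step, however, has a genuine gap, and it is exactly the step the paper warns about: ``We prove this result \emph{not} through a compactness argument as one may expect.'' Your claim that the $L^p$ bound on $\sup_t \mathbb S_t$ plus the $1/N$ jump bound gives Aldous-type tightness of $(\Q_N)$ on $\cD[0,T]$ is false. The obstruction is that nothing in the constraints controls \emph{when} the quadratic variation accrues: one can construct $\Q_N \in \mathbb{M}^{(K)}_N$ under which the canonical process is constant until time $1/2$, then executes $O(N^2)$ steps of size $\pm 1/N$ inside a window of length $1/N$, and is constant afterwards, reproducing (approximately) the marginal $\mu^{(N)}$. For such a sequence $\mathbb{E}_{\Q_N}\sup_t \mathbb S_t^p$ stays bounded by Doob and the jumps are $\le 1/N$, but the Skorokhod modulus $w'(\delta)$ stays of order $1$ for every fixed $\delta>0$ as $N\to\infty$, so the Aldous criterion fails and $(\Q_N)$ is not tight in $\cD[0,T]$. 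Consequently you cannot extract a subsequential weak limit $\Q$ supported on $\cC[0,T]$, and the rest of the argument collapses. The paper's Proposition \ref{lem4.2} instead proceeds by an explicit \emph{lifting}: given a single $\hat{\mathbb Q}\in\mathbb M^{(K)}_N$, it recursively builds (on a Wiener space) stopping times $\sigma_1\le\dots\le\sigma_m$ and a continuous Brownian martingale $Z$ whose law at these times reproduces the jump structure of $\hat{\mathbb S}$ under $\hat{\mathbb Q}$, then couples $Z_T$ to a $\mu$-distributed variable $\Lambda$ using a Skorokhod representation and the Prokhorov metric estimate $d(\nu_1,\mu)<2\epsilon$, and finally passes to $\Gamma_t=E^W(\Lambda\,|\,\mathcal F^W_t)\in\mathbb M_\mu$. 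The quantitative closeness of $\Gamma$ to $Z$ (via Doob's inequality and the $L^p$ control) combined with Assumption \ref{asm2.1} yields the one-sided estimate with the explicit error $f_K(\epsilon,N)$. This construction is what makes the argument go through without any compactness of the family of discrete measures, and it cannot be replaced by the tightness route you sketch.
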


\subsection{Proofs of Theorems \ref{thm2.1} and \ref{thm2.2}} \label{ss.proof}

Since $V_N\ge V$, Theorem \ref{thm2.1} and Theorem \ref{thm2.2}
would follow from the following two inequalities,
\begin{equation}
\label{2.10}
\lim
\sup_{N\rightarrow\infty}V_{N}(G)\leq \sup_{\Q\in\mathbb{M}_\mu}
\mathbb{E}_{\Q}\left[G(\mathbb S)\right] \end{equation} and
\begin{equation}
\label{2.10++}
V(G)\geq \sup_{\Q\in\mathbb{M}_\mu} \mathbb{E}_{\Q}\left[G(\mathbb S)\right].
\end{equation}
The first
inequality is the difficult one and it will be proved in Sections 4 and 5. The second
inequality is simpler and we provide its proof here.

Let $\Q\in\mathbb{M}_\mu$ and let $\pi=(g,\gamma)$ be super-replicating. Since
$\gamma$ is progressively measurable in the sense of
\reff{e.adapted}, the stochastic integral
$$
\int_{0}^t \gamma_u(\mathbb S)d\mathbb S_u
$$
is defined with respect to $\Q$. Also $\mathbb{Q}$ is a
martingale measure. Hence, the above stochastic integral is a $\Q$ local--
martingale.  Moreover, from (\ref{2.6+}) we have,
$$
\int_{0}^t
\gamma_u(\mathbb S)d\mathbb S_u \ge -M(1+\sup_{0\leq u\leq t}|\mathbb{S}_t|^p), \ \ t\in [0,T].
$$
Also in view of (\ref{2.3}) and the Doob-Kolmogorov
inequality for the martingale $\mathbb{S}_t$,
 $$
 \mathbb{E}_{\Q}\sup_{0\leq t\leq T}|\mathbb{S}_t|^p \le
 C_p \mathbb{E}_{\Q}|\mathbb{S}_T|^p = C_p \int |x|^p d\mu < \infty.
$$
Therefore, ${\mathbb E}_{\Q}\int_{0}^T \gamma_u(\mathbb S)d\mathbb S_u\leq 0$.
Since $\pi$ is
super-replicating, we conclude that
$$
\mathbb{E}_{\Q}\left[G(\mathbb S)\right] \leq{\mathbb E}_{\Q}\left(
 \int_{0}^T \gamma_u(\mathbb S)d\mathbb S_u
 +g(\mathbb S_T)\right)
 \leq  {\mathbb E}_{\Q} \left[g(\mathbb S_T)\right]
 =\int g d\mu,
$$
where in the last equality we again used
the fact that the distribution of $\mathbb{S}_T$ under
 $\mathbb{Q}$ is equal to $\mu$. This completes  the proof
of the lower bound. Together with \reff{2.10},
which will be proved later,
it also completes the proofs of the theorems.
\qed

\section{Quasi sure approach and full duality} \label{s.quasi}

An alternate approach  to define robust hedging is to use the notion of quasi sure
 super-hedging
 as was done in \cite{GLT,STZ}.
 Let us briefly recall this notion.
Let $\mathcal{Q}$ be the set of all martingale measures $\Q$ on the canonical space $\mathcal{C}^{+}[0,T]$ under which the canonical process $\mathbb{S}$
satisfies $\mathbb{S}_0=1, \Q$-a.s., has quadratic variation and satisfies $\mathbb E_{\Q} \sup_{0\leq t\leq T} \mathbb{S}_t<\infty$. In this
market, an admissible hedging strategy (or a portfolio) is defined as a pair $\pi=(g,\gamma)$, where $g\in\mathbb{L}_1(\mathbb R_{+},\mu)$ and $\gamma$ is
a progressively measurable process such that the stochastic integral $$ \int_{0}^t \gamma_ud\mathbb{S}_u, \ \ t\in [0,T] $$ exists for any probability
measure $\Q \in \mathcal{Q}$ and satisfies (\ref{2.6+}) $\Q$-a.s. We refer the reader to \cite{STZ} for a complete characterization of this class. In
particular,
 one does not restrict the trading strategies to be
of bounded variation. A portfolio $\pi=(g,\gamma)$ is called an (admissible) {\em quasi-sure super-hedge}, provided that $$ g(\mathbb S_T)+\int_{0}^T
\gamma_u d\mathbb{S}_u \geq G(\mathbb S), \ \ \mathbb{Q} \  \mbox{a.s.}, $$
 for all $\Q \in \mathcal{Q}$.
 Then, the minimal super-hedging cost is given by
$$ V_{qs}(G):=\inf\left\{\int g d\mu:  \ \exists \gamma \
 \mbox{such that} \ \pi:=(g,\gamma)\
\mbox{is a quasi-sure super-hedge} \right\}. $$ Clearly, $$ V(G)\geq V_{qs}(G). $$ From simple arbitrage arguments it follows that $$
V_{qs}(G)\geq\inf_{\lambda\in \mathbb{L}^1(\mathbb R_{+},\mu)} \sup_{\Q\in\mathcal{Q}}\mathbb E_{\Q} \left(G(\mathbb S)-\lambda(\mathbb
S_T)+\int\lambda d\mu \right) $$ where we set $\mathbb{E}_{\Q}\xi\equiv-\infty$, if $\mathbb{E}_{\Q}\xi^{-}=\infty$. Since
$\inf\sup\geq\sup\inf$, the above two inequalities yield, \begin{eqnarray*} V(G)&\geq & V_{qs}(G)\\ &\geq&\inf_{\lambda\in
\mathbb{L}^1(\mathbb{R}_{+},\mu)} \sup_{\Q\in\mathcal{Q}} \mathbb E_{\Q}\left(G(\mathbb S)-\lambda(\mathbb S_T) +\int\lambda d\mu \right)\\ &
\geq &\sup_{\Q\in\mathcal{Q}} \inf_{\lambda\in \mathbb{L}^1(\mathbb{R}_{+},\mu)} \mathbb E_{\Q} \left(G(\mathbb S)-\lambda(\mathbb S_T)
+\int\lambda d\mu \right). \end{eqnarray*}

Now if $\Q \in \mathbb{M}_\mu$, then the two terms
involving $\lambda$ are equal.  So we first restrict the measures to the set $\mathbb{M}_\mu$ and then use
Theorem \ref{thm2.1}.  The result is
\begin{eqnarray*}
V(G)&\geq & V_{qs}(G)\geq
\inf_{\lambda\in \mathbb{L}^1(\mathbb{R}_{+},\mu)}
\sup_{\mathbb Q\in\mathcal{Q}} \mathbb E_{\Q}\left(G(\mathbb S)-\lambda(\mathbb S_T)
+\int\lambda d\mu\right)\\ &\ge& \sup_{\Q\in\mathcal{Q}}
\inf_{\lambda\in \mathbb{L}^1(\mathbb{R}_{+},\mu)}
 \mathbb E_{\Q} \left(G(\mathbb S)-\lambda(\mathbb S_T)
 +\int\lambda d\mu \right)\\ &\ge
&\sup_{\Q\in\mathbb{M}_\mu}
\mathbb{E}_{\Q}\left[G(\mathbb S)\right]= V(G).
\end{eqnarray*}
Hence, all terms in the above are equal. We
summarize this in the following which can be seen as the full duality.
\begin{prp}
\label{p.quasi}
Assume that the European claim $G$ satisfies Assumption
\ref{asm2.1} and the probability measure $\mu$ satisfies \reff{2.2},\reff{2.3}.
Then,
\begin{eqnarray*}
V(G)&= & V_{qs}(G)
=\sup_{\mathbb Q\in\mathbb{M}_\mu} \mathbb{E}_{\Q}\left[G(\mathbb S)\right] \\
&=& \inf_{\lambda\in \mathbb{L}^1(\mathbb{R}_{+},\mu)}
\sup_{\mathbb Q\in\mathcal{Q}}
\mathbb E_{\Q}\left(G(\mathbb S)-\lambda(\mathbb S_T)
+\int\lambda d\mu \right)\\ &=& \sup_{\Q\in\mathcal{Q}}
\inf_{\lambda\in \mathbb{L}^1(\mathbb{R}_{+},\mu)}
\mathbb E_{\Q} \left(G(\mathbb S)-\lambda(\mathbb S_T)
+\int\lambda d\mu \right).
\end{eqnarray*}
\end{prp}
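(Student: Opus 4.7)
The plan is to establish the chain of inequalities
\[
V(G) \;\geq\; V_{qs}(G) \;\geq\; \inf_{\lambda} \sup_{\Q\in\cQ} \E_{\Q}\!\left(G(\S)-\lambda(\S_T)+\textstyle\int\lambda\,d\mu\right) \;\geq\; \sup_{\Q\in\cQ}\inf_{\lambda} \E_{\Q}\!\left(\cdots\right) \;\geq\; \sup_{\Q\in\M_\mu} \E_{\Q}[G(\S)] \;=\; V(G),
\]
where the final equality is Theorem \ref{thm2.1}. Once this chain is closed, every term is forced to coincide and the proposition follows.

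For the first inequality, I would argue that any admissible path-wise super-replicating portfolio $(g,\gamma)$ in the sense of Definition \ref{dfn2.1} is automatically an admissible quasi-sure super-hedge: since $\gamma$ has bounded variation and $\S$ has continuous paths under every $\Q\in\cQ$, the pathwise Stieltjes integral $\int_0^t\gamma_u\,d\S_u$ defined via integration by parts agrees with the $\Q$-stochastic integral, and the pathwise super-replication inequality trivially holds $\Q$-a.s.

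For the second inequality, I would take any quasi-sure super-hedge $(g,\gamma)$ and plug in $\lambda=g$. For any $\Q\in\cQ$, the process $t\mapsto\int_0^t\gamma_u\,d\S_u$ is a $\Q$-local martingale bounded below by $-M(1+\sup_{u\leq t}\S_u^p)$, and the Doob $L^p$-inequality together with $\E_\Q[\sup_t\S_t]<\infty$ (sharpened by the $L^p$ bound coming from \reff{2.3} on suitable elements of $\cQ$; for the ones where it fails, $\E_\Q\xi^-=\infty$ makes the bound vacuous) and Fatou's lemma applied along a localizing sequence upgrade it to a true supermartingale, so $\E_\Q\!\int_0^T\gamma_u\,d\S_u\leq 0$. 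Rearranging the super-hedge inequality and taking expectation gives $\E_\Q[G(\S)-g(\S_T)]\leq 0$, hence $\E_\Q(G-g(\S_T)+\int g\,d\mu)\leq\int g\,d\mu$; taking supremum over $\Q$ and infimum over $g$ produces the desired bound. The third inequality is the elementary min-max inequality $\inf\sup\geq\sup\inf$.

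The key final inequality uses that $\M_\mu\subset\cQ$: every $\Q\in\M_\mu$ is a martingale measure whose terminal distribution is $\mu$, has $\S_0=1$, and by Doob's $L^p$-inequality together with \reff{2.3} satisfies $\E_\Q[\sup_t\S_t]<\infty$, so it lies in $\cQ$. For such $\Q$ the marginal constraint forces $\E_\Q[\lambda(\S_T)]=\int\lambda\,d\mu$ for every $\lambda\in\L^1(\R_+,\mu)$, so the two $\lambda$-terms cancel and $\inf_\lambda\E_\Q(\cdots)=\E_\Q[G(\S)]$. Restricting the supremum in $\cQ$ to the subset $\M_\mu$ then gives $\sup_{\Q\in\cQ}\inf_\lambda\E_\Q(\cdots)\geq\sup_{\Q\in\M_\mu}\E_\Q[G(\S)]$, and Theorem \ref{thm2.1} identifies the right-hand side with $V(G)$, closing the loop. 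The main delicate point will be the supermartingale argument in step two, where one must handle the asymmetric lower bound \reff{2.6+} carefully so that taking the expectation of a general $\Q$-local martingale in $\cQ$ is legitimate; everything else is formal rearrangement and use of the already-proven Theorem \ref{thm2.1}.
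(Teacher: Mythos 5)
Your proposal is correct and follows essentially the same approach as the paper: the identical chain $V(G)\geq V_{qs}(G)\geq \inf_\lambda\sup_{\Q\in\cQ}\geq\sup_{\Q\in\cQ}\inf_\lambda\geq\sup_{\Q\in\M_\mu}\E_\Q[G(\mathbb{S})]=V(G)$, closed by Theorem \ref{thm2.1}, with the convention $\E_\Q\xi\equiv-\infty$ when $\E_\Q\xi^-=\infty$ handling non-integrable cases and the inclusion $\M_\mu\subset\cQ$ justifying the restriction in the last step.
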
 \vspace{10pt}

\section{Proof of the main results}
\label{sec3} \setcounter{equation}{0}

The rest of the paper is devoted to the proof of \reff{2.10}. \subsection{Reduction to bounded claims}

The following result will be used in two places in the paper. The first place is Lemma \ref{l.bounded} where we reduce the problem to claims that are
bounded from above. The other place is Lemma \ref{lem3.1}.

Consider a claim with pay-off $$ \alpha_{K}(S):= \|S\| \ \chi_{\{\|S\| \geq K\}}+ \frac{ \|S\|}{K}. $$

Recall that $V_N(\alpha_K)$ is defined in subsection \ref{ss.app}.

\begin{lem} \label{l.bound}
$$
\limsup_{K\rightarrow\infty} \ \limsup_{N \rightarrow\infty} V_N(\alpha_{K}) =0.
$$ \end{lem}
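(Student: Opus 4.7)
The plan is to use the pathwise $L^p$-Doob maximal inequality of \cite{ABPST} (referenced in Remark \ref{example}) to construct an admissible super-replicating portfolio for $\alpha_K$ that lives in $\mathcal{A}_N$ and whose cost is uniformly small in $N$ when $K$ is large. The first reduction exploits $M_T := \|S\| \ge S_0 = 1$ and $p>1$ (from \eqref{2.3}) to bound
$$
\alpha_K(S) = M_T \chi_{\{M_T\ge K\}} + \frac{M_T}{K} \le \frac{M_T^p}{K^{p-1}} + \frac{M_T^p}{K} \le \frac{2 M_T^p}{K^{\min(1,p-1)}}.
$$
It therefore suffices to super-replicate $M_T^p$ by a strategy in $\mathcal{A}_N$ at cost bounded by a constant multiple of $\int x^p\, d\mu<\infty$.

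In the continuous-trading version this is a standard Azema--Yor type computation: from $M_T^p = 1 + p M_T^{p-1}(M_T - S_T) + p\int_0^T M_t^{p-1} dS_t$ (valid because $dM_t$ is supported on $\{S_t = M_t\}$) and the weighted Young inequality $\tfrac{p}{p-1} M_T^{p-1} S_T \le \tfrac12 M_T^p + \tfrac{2^{p-1}}{p-1} S_T^p$, one obtains
$$
M_T^p \le \frac{2^p}{p-1} S_T^p + \int_0^T \gamma_t\, dS_t, \qquad \gamma_t := -\tfrac{2p}{p-1} M_t^{p-1}.
$$
To bring this into $\mathcal{A}_N$, set $V_k := S_{\tau_k^{(N)}(S)}$, $U_k := \max_{j\le k} V_j$, $H := H^{(N)}(S)$, and use $\gamma_t^{(N)} := -\tfrac{2p}{p-1} U_k^{p-1}$ on $(\tau_k^{(N)}, \tau_{k+1}^{(N)}]$. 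Abel summation mirrors the integration by parts, and using $|V_{k+1}-V_k|\le 1/N$ together with $0\le M_T - U_H \le 1/N$ yields a discrete pathwise inequality with a remainder of order $M_T^{p-1}/N$; a further Young step $M_T^{p-1}/N \le \varepsilon M_T^p + C_{p,\varepsilon} N^{-p}$ with $\varepsilon$ small absorbs that error and leaves
$$
M_T^p \le C_p'\, S_T^p + \int_0^T \gamma_t^{(N)}\, dS_t + O(N^{-p})
$$
for a slightly larger constant $C_p'$. Admissibility of $\gamma^{(N)}$ follows from $|\gamma^{(N)}|\le \tfrac{2p}{p-1} M_T^{p-1}$ via the integration-by-parts formula in Definition~\ref{dfn2.1}. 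Combined with the initial reduction this yields a portfolio in $\mathcal{A}_N$ super-replicating $\alpha_K$ at cost $\tfrac{2C_p'}{K^{\min(1,p-1)}}\int x^p d\mu + O(N^{-p})$, and letting $N\to\infty$ and then $K\to\infty$ gives the lemma.

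The step I expect to be the main obstacle is the pathwise (as opposed to expectation) control of the Abel-summation remainder: one has to compare sums of the form $\sum_k U_k^{p-1}(V_{k+1}-V_k)$ to $\int_0^T M_t^{p-1} dS_t$ path by path, exploiting that $U$ is non-decreasing with jumps of exactly $1/N$ at ``new-maximum'' indices, so that the corresponding contributions become Riemann sums for $\int_1^{M_T} m^{p-1} dm$ with $O(1/N)$ discrepancy per step. Once this bookkeeping is in place, the Young inequalities and the iterated limits are routine.
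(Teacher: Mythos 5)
Your plan is correct and yields the lemma, but it takes a noticeably different route from the paper. The key structural difference is \emph{where} the $K$-dependence lives. You first use $M_T:=\|S\|\ge S_0=1$ and $p>1$ to get the clean scalar reduction $\alpha_K(S)\le 2M_T^p/K^{\min(1,p-1)}$, and then super-replicate $M_T^p$ once and for all by a $K$-independent discrete Az\'ema--Yor/Doob hedge of cost $\asymp C_p\int x^p\,d\mu<\infty$; the $K\to\infty$ limit is then a trivial scaling of the static leg. The paper instead builds the $K$-dependence directly into the strategy: its $\gamma^{(N,K)}$ is a sum of two pathwise-Doob legs, one scaled by $1/K$ (for the $M_T/K$ piece) and one that only switches on after the discrete running maximum exceeds $K-1$ (via the stopping index $\theta$), with a corresponding two-piece static part $g^{(N,K)}$; the $K\to\infty$ limit then comes from dominated convergence of $\int\bigl((c_px)^p-(c_p(K-1))^p\bigr)^+\,d\mu\to 0$. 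A second difference: the paper simply invokes Proposition~2.1 of \cite{ABPST}, which already is a deterministic, discrete-time Doob inequality, so no Abel-summation bookkeeping or $O(M_T^{p-1}/N)$ error absorption is needed (the $+2/N$ in $g^{(N,K)}$ takes care of the gap between $M_T$ and the discrete maximum). Your approach re-derives the discrete inequality from scratch; this is self-contained but is exactly the extra work you flagged. Two small points to tighten in a final write-up: the admissibility bound \reff{2.6+} relies not just on $|\gamma^{(N)}|\le\frac{2p}{p-1}M_t^{p-1}$ but also on $\gamma^{(N)}$ being non-increasing (so the Stieltjes term $-\int S\,d\gamma^{(N)}\ge 0$), and absorbing the $\varepsilon M_T^p$ error on the left requires dividing through by $1-\varepsilon$, which enlarges $C_p'$ accordingly.
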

\begin{proof}
In this proof,
we always assume that $N >K > 1$. Let $\tau_k=\tau^{(N)}_k(S)$ and $n=H^{(N)}(S)$ be as in \reff{e.tau}, \reff{e.H}, respectively, and set
$$
\theta:=\theta^{(K)}_N(S)=\min\left\{k:S_{\tau_k}\geq K-1 \right\} \wedge n.
$$
Set $c_p:= p/(p-1)$ where $p$ as in \reff{2.3}.
We define a portfolio $(g^{(N,K)},\gamma^{(N,K)})\in\mathcal{A}_N$ as
follows. For $t\in (\tau_k,\tau_{k+1}]$ and $k=0,1,...,n-1$, let
$$
\gamma^{(N,K)}_t(S)=\gamma^{(N,K)}_{{\tau_k}}(S) =-\frac{ p^2}{K(p-1)}\left(\max_{0\leq
i\leq k}S^{p-1}_{\tau_i}\right)
 -\frac{p^2}{(p-1)}\chi_{\{k\geq \theta\}}\
\left(\max_{\theta \leq i\leq k}S^{p-1}_{\tau_i}\right) ,
$$
$$
g^{(N,K)}(x)=\frac{1}{K}(1+\left((c_p x)^p-c_p\right)^{+})+ \left((c_p x)^p-(c_p\left(K-1\right))^p\right)^{+}+\frac{2}{N}.
$$
We use Proposition 2.1 in
\cite{ABPST} and the inequality $x<1+x^p$, $x\in\mathbb{R}_{+}$, to conclude that for any $t\in [0,T]$
$$
g^{(N,K)}(S_t)+\int_{0}^t \gamma^{(N,K)}_u dS_u
\geq \frac{\bar{S}_t}{K} + \bar{S}_t\ \chi_{\{ \bar{S}_t \geq K\}}, $$ where $$ \bar{S}_t:= \max_{0\leq u\leq t}S_u .
$$
Therefore,
$\pi^{(N,K)}:=(g^{N,K)},\gamma^{(N,K)})$ satisfies (\ref{2.6+}) and super-replicates
$\alpha_{K}$. Hence,
$$
V_N(\alpha_{K}) \le \int g^{(N,K)}d\mu.
$$
Also, in view of (\ref{2.3}), $$ \limsup_{K\rightarrow\infty}\ \limsup_{N\rightarrow\infty} \int g^{(N,K)}d\mu=0. $$ These two inequalities complete the
proof of the lemma. \end{proof}

A corollary of the above estimate is the following reduction to claims that are bounded from above.

\begin{lem}
\label{l.bounded} If suffices to prove \reff{2.10} for claims $G$ that are {{non-negative,}} bounded from above and satisfying Assumption
\ref{asm2.1}. \end{lem}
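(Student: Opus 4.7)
The plan is a straightforward truncation-and-tail-control argument, with Lemma \ref{l.bound} doing the heavy lifting on the tail. By setting $\tilde\omega=\mathbf{0}$ in the first part of Assumption \ref{asm2.1}, one obtains the growth bound $|G(S)|\le C_0+L\|S\|$ with $C_0:=|G(\mathbf{0})|$. For each $K>C_0$, put $K':=(K-C_0)/L$ and decompose
$$
G = G_K + R_K,\qquad G_K:=(G\wedge K)\vee(-K),\qquad R_K:=G-G_K.
$$
The shifted claim $\widehat G_K:=G_K+K$ takes values in $[0,2K]$, is thus non-negative and bounded above, and inherits Assumption \ref{asm2.1} (truncation is $1$-Lipschitz and does not worsen either Lipschitz constant, while a constant shift is invariant under both conditions). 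Hence the hypothesis of the lemma applies to $\widehat G_K$, and subtracting $K$ on both sides gives $\limsup_N V_N(G_K)\le\sup_{\Q\in\M_\mu}\E_\Q[G_K(\Sb)]$.

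Next I would control the residual $R_K$. Using $|G(S)|\le C_0+L\|S\|$ one checks (case by case on whether $G\gtrless \pm K$) that
$$
|R_K(S)|\le (|G(S)|-K)^+\le L(\|S\|-K')^+\le L\,\alpha_{K'}(S).
$$
Subadditivity of super-replication together with monotonicity of $V_N$ give $V_N(G)\le V_N(G_K)+L\,V_N(\alpha_{K'})$, and on the dual side $\E_\Q[G_K]\le \E_\Q[G]+L\,\E_\Q[\alpha_{K'}]$. Combining with the previous step,
$$
\limsup_{N\to\infty} V_N(G)\le \sup_{\Q\in\M_\mu}\E_\Q[G(\Sb)] + L\sup_{\Q\in\M_\mu}\E_\Q[\alpha_{K'}] + L\limsup_{N\to\infty}V_N(\alpha_{K'}).
$$

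Finally I send $K\to\infty$ (so that $K'\to\infty$). Lemma \ref{l.bound} provides $\limsup_K\limsup_N V_N(\alpha_{K'})=0$ directly. For the expectation term, Doob's $L^p$ inequality applied to the $\Q$-martingale $\Sb$, combined with \reff{2.3}, yields $\E_\Q[\|\Sb\|^p]\le C_p\int x^p\,d\mu$, uniformly in $\Q\in\M_\mu$; a H\"older/Chebyshev computation then gives $\sup_\Q\E_\Q[\alpha_{K'}]=O(1/K')+O(1/(K')^{p-1})\to 0$. This establishes \reff{2.10} for the original $G$, as required.

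The only genuinely delicate step is the uniform-in-$\Q$ decay $\sup_\Q\E_\Q[\alpha_{K'}]\to 0$, which crucially uses both the $p$-th moment hypothesis \reff{2.3} and Doob's maximal inequality for the continuous martingale $\Sb$; the rest is routine bookkeeping around the truncation, together with the explicit tail-hedge already built in Lemma \ref{l.bound}.
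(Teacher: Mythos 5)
Your argument is correct, and it is organized differently from the paper's. The paper proceeds in two stages: it first proves \reff{2.10} for non-negative claims by truncating only from above ($G_K:=G\wedge K$), where the dual-side error costs nothing since $G_K\le G$, and the hedging-side error is controlled by Lemma~\ref{l.bound}; it then extends to general claims by truncating only from below ($\check G_c:=G\vee(-c)$), where the hedging-side error costs nothing and the dual-side error $\sup_{\Q}\E_\Q[\check e_c]$ is handled by a bootstrap — $\check e_c$ is itself a non-negative claim satisfying Assumption~\ref{asm2.1}, so the already-established identity $\sup_\Q\E_\Q[\check e_c]=\lim_N V_N(\check e_c)$ reduces the dual error to Lemma~\ref{l.bound} a second time. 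You instead truncate at both ends in a single step (and shift to restore non-negativity), which forces you to control \emph{both} error terms simultaneously: the hedging side $V_N(\alpha_{K'})$ again via Lemma~\ref{l.bound}, and the dual side $\sup_{\Q\in\M_\mu}\E_\Q[\alpha_{K'}]$ directly via Doob's $L^p$ maximal inequality, H\"older, and \reff{2.3}. This is a genuinely different mechanism for the dual-side error: you give a self-contained probabilistic estimate rather than reusing the theorem-in-progress, at the cost of one more maximal-inequality computation. Both are valid; the paper's bootstrap is economical in that it avoids duplicating Doob-type bounds (which already appear in the proof of \reff{2.10++}), while your route makes the two-sided truncation symmetric and somewhat shorter to state. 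One small point worth making explicit in your write-up: the decomposition $\E_\Q[G_K]\le\E_\Q[G]+L\,\E_\Q[\alpha_{K'}]$ implicitly requires $\E_\Q[G]$ to be a well-defined finite number, which does hold because $|G|\le C_0+L\|S\|$ and $\E_\Q\|\Sb\|<\infty$ for $\Q\in\M_\mu$ — the same integrability you invoke for the tail estimate.
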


\begin{proof}

{{ We proceed in two steps.  First suppose that \reff{2.10} holds for nonnegative claims that are bounded from above. Then, the conclusions of Theorem
\ref{thm2.1} and Theorem \ref{thm2.2} also hold for such claims.

Now let $G$ be a non-negative claim satisfying Assumption \ref{asm2.1}. For $K>0$, set
$$
G_K:= G \wedge K.
$$ Then, $G_K$ is bounded and \reff{2.10} holds
for $G_K$. Therefore,
$$
\limsup_{N \to \infty} V_N(G_K) \le \sup_{{\mathbb Q}\in \mathbb{M}_\mu} \E_\Q \left[G_K(\Sb)\right] \le \sup_{{\mathbb Q}\in
\mathbb{M}_\mu} \E_\Q \left[G(\Sb)\right].
$$
In view of Assumption \ref{asm2.1},
$$
G(S) \le G(0) + L \| S\|.
$$
Hence, the set $\{G(S) \ge K\}$ is
included in the set $\{ L\|S\|+G(0)\geq K \}$ and
$$
G \le G_K + \left(L\|S\|+G(0)-K\right) \chi_{\{L\|S\|+G(0)\geq K\}}.
$$
By the linearity of the
market, this inequality implies that
$$
V_N(G) \le V_N(G_K) + V_N\left(\left(L\|S\|+G(0)-K\right)\chi_{\{L\|S\|+G(0)\geq K\}}\right).
$$
Moreover, in view
of the previous lemma,
$$
\limsup_{K\rightarrow\infty}\ \limsup_{N\rightarrow\infty} V_N\left(\left(L\|S\|+G(0)-K\right)\chi_{\{L\|S\|+G(0)\geq
K\}}\right)=0.
$$
Using these, we conclude that
$$ \limsup_{N \to \infty} V_N(G)\leq \sup_{{\mathbb Q}\in \mathbb{M}_\mu} \E_\Q \left[G(\Sb)\right].
$$
Hence, \reff{2.10} holds for all functions that are non-negative and satisfy Assumption \ref{asm2.1}. By adding an appropriate constant this results
extends to all claims that are bounded from below and satisfying Assumption \ref{asm2.1}.

Now suppose that $G$ is a general function that satisfies 
Assumption \ref{asm2.1}.  For $c>0$, set
$$
\check{G}_c:= G \vee (-c).
$$ Then, $\check{G}$ is
bounded from below and \reff{2.10} holds, i.e.,
$$
\limsup_{N\to \infty} V_N(G) \le \limsup_{N\to \infty}   V_N(\check{G}_c)
= \sup_{\Q \in \M_\mu}\ \E_\Q
\left[ \check{G}_c\left(\mathbb S\right)\right].
$$
By Assumption \ref{2.1}, $\check{G}_c\left( S\right)\le G\left(S\right) + \check e_c(S)$ where the
error function is
$$
\check e_c(S):=\left(L\|S\| -G(0)-c\right)\chi_{\{L\|S\| -G(0)- c\ge 0\}}(S).
$$
Since $\check e_c \ge 0$ and it satisfies the
Assumption \ref{2.1},
$$
\sup_{\Q \in \M_\mu}\ \E_\Q \left[ \check e_c\left(\mathbb S\right)\right]
= V(\check e_c) = \lim_{N\to \infty} V_N(\hat e_c).
$$
In view of Lemma \ref{l.bound},
$$
\limsup_{c \to \infty} \sup_{\Q \in \M_\mu}\ \E_\Q
\left[ \check e_c\left(\mathbb S\right)\right] = \limsup_{c \to
\infty} \limsup_{N\to \infty} V_N(\check e_c)=0.
$$
We combine the above inequalities to conclude that
\begin{eqnarray*}
\limsup_{N\to \infty} V_N(G) & \le
& \limsup_{c \to \infty} \sup_{\Q \in \M_\mu}\ \E_\Q \left[ \check{G}_c\left(\mathbb S\right)\right] \\ & \le & \sup_{\Q  \in \M_\mu}\ \E_\Q  \left[
G\left(\mathbb S\right)\right]+ \limsup_{c \to \infty} \sup_{\Q \in \M_\mu}\ \E_\Q \left[ \check e_c\left(\mathbb S\right)\right] \\ & = & \sup_{\Q \in
\M_\mu}\ \E_\Q \left[ G\left(\mathbb S\right)\right].
\end{eqnarray*} This exactly \reff{2.10}. }}
\end{proof}

\subsection{A countable class of piecewise constant functions}
In this section, we provide a piece-wise constant approximation of any continuous function
$S$.  Fix a positive integer $N$.  For any $S \in \cC^+[0,T]$, let $\tau^{(N)}_k(S)$ and $H^{(N)}(S)$ be the times defined in \reff{e.tau} and \reff{e.H},
respectively. To simplify the notation, we suppress their dependence on $S$ and $N$ and also set \begin{equation} \label{e.n}
 n=H^{(N)}( S).
 \end{equation}
  We first define the obvious piecewise
 constant approximation $\hat S=\hat S^{(N)}(S)$
 using these
 times.  Indeed, set
\begin{equation}
\label{e.shat}
\hat S_t:= \sum_{k=0}^{n-1} \ S_{\tau_k} \chi_{[\tau_k,
 \tau_{k+1})}(t) +
\left[S_{\tau_{n-1}}+\frac{1}{N} sign( S_T- S_{\tau_{n-1}})\right] \ \chi_{\{T\}}(t).
\end{equation}

The function, that takes $S$ to $\hat S$ is a map of $\cC^+[0,T]$ into the set of  all functions with values in the target set
$$
A^{(N)}=\left\{i/N\ :\
i=0,1,2, \ldots, \right\}.
$$
Indeed, $\hat S$ is behind the definition of the approximating costs $V_N$. However, this set of functions is not countable
as the jump times are not restricted to a countable set. So, we provide yet another approximation by restricting the jump times as well.

Let $\hat\Omega:=\mathbb{D}[0,T]$ be the space of all right continuous functions $f:[0,T]\rightarrow\mathbb{R}_{+}$ with left--hand limits
($c\grave{a}dl\grave{a}g$ functions). For integers $N,k$, let
$$
U_k^{(N)}:= \left\{i/(2^{k}N): i=1,2, \ldots, \right\} \cup \left\{1/(i2^k N) : i=1,2,
\ldots, \right\},
$$
be the sets of possible differences between two consecutive jump times. Next, we define subsets  $\D^{(N)}$ of  $\mathbb{D}[0,T]$.

\begin{dfn} \label{d.DN} {\rm{A function $f \in \mathbb{D}[0,T]$ belongs to $\D^{(N)}$, if  it satisfies the followings,

1. $f(0)\in \{1-1/N,1+1/N\}$,

2.  $f$ is piecewise constant with jumps at times $t_1,...,t_n$, where $$ t_0=0<t_1<t_2<...<t_n<T, $$

3.  for  any $k=1,...,n$, $|f(t_k)-f(t_{k-1})|=1/N$,

4.  for  any $k=1,...,n$, $t_k-t_{k-1}\in  U^{(N)}_k$. }}
\qed
\end{dfn}

We emphasize, in the fourth condition, the dependence of the set $U^{(N)}_k$ on $k$.  So as $k$ gets larger, jump times take values in a finer grid. Also,
for technical reasons we will need that the functions value at $0$ will be equal to $1\pm 1/N$.

We continue by  defining an approximation of
a generic stock price process $S$,
$$
F^{(N)} :  \cC^+[0,T] \to \D^{(N)},
$$
as follow.  Recall $\tau_k=\tau^{(N)}_k(S)$, $n=H^{(N)}(S)$ from above and  also
from  \reff{e.tau}, \reff{e.H}. Set $\hat \tau_0:=0$, $\hat \tau_n=T$
and for $k=1,...,n-1$, define
\begin{eqnarray*}
\hat \tau_k&:=&\sum_{i=1}^k\ \Delta \hat \tau_i,\\
\Delta \hat \tau_i&=& \max\{\Delta t \in U^{(N)}_i: \Delta t< \Delta \tau_i= \tau_i-\tau_{i-1}\},
\qquad i=1,\ldots, n-1.
\end{eqnarray*}
Clearly,  $0=\hat \tau_0<\hat
\tau_1<...<\hat \tau_{n-1}<\hat \tau_n=T$ and $\hat \tau_k < \tau_k$ for all $k=0,\ldots,n-1$.

We are now ready to define $F^{(N)}(S)$.
For $n=1$, set
$$
F^{(N)}(S)\equiv 1 + \frac{1}{N}\ sign(S_T-1),
$$
and for $n>1$, define
 \begin{eqnarray}
 \label{e.F}
 F^{(N)}_t(S)&= &\sum_{k=1}^{n-1} \
 S_{\tau_{k}} \chi_{[\hat \tau_{k-1},\hat \tau_k)}(t) \\
 \nonumber
&& + \left(S_{\tau_{n-1}}+\frac{1}{N} sign( S_T- S_{\tau_{n-1}})\right)
 \ \chi_{[\hat\tau_{{{n-1}}},T]}(t).
 \end{eqnarray}
Observe
that the value of the $k$-th jump of the process $F^{(N)}(S)$ equals to the value of the $(k+1)$-th jump of the discretization $\hat{S}$ of the original
process $S$. Indeed, for $n > 2$,
 \begin{eqnarray}
 \label{e.shift}
 F^{(N)}_{\hat \tau_{m}} - F^{(N)}_{\hat \tau_{m-1}} &=& S_{\tau_{m+1}}  - S_{\tau_{m}}, \qquad
\forall\  m=1,\ldots,n-2,
\end{eqnarray}
and for $n\ge 2$,
$$
F^{(N)}_{\hat \tau_{n-1}} - F^{(N)}_{\hat \tau_{n-2}} = \frac{1}{N}sign\left(S_{T}  -
S_{\tau_{n-1}}\right).
$$
This shift is essential in order to deal with some delicate questions of adaptedness and predictability. We also
recall that the jump times of $\hat S$ are the random times $\tau_k$'s while the jump times of $F^{(N)}(S)$ are $\hat \tau_k$'s and that all these times
depend both on $N$ and $S$. Moreover, by construction, $F^{(N)}(S) \in \D^{(N)}$. But, it may not be progressively measurable as defined in
\reff{e.adapted}. However, we use $F^{(N)}$ only to lift progressively measurable maps defined on $\D^{(N)}$ to the initial space $\Omega = \cC^+[0,T]$ and
this yields progressively measurable maps on $\Omega$. This procedure is defined and the measurability is proved in Lemma \ref{l.mbl}, below.

The following lemma shows that $F^{(N)}$ is close to $S$ in the sense of Assumption \ref{asm2.1}. We also point out that the following result is a
consequence of the particular structure of $\D^{(N)}$ and in particular $U^{(N)}_k$'s.

\begin{lem}
\label{l.1}
Let $F^{(N)}$ be the map defined in \reff{e.F}.  
For any $G$ satisfying the Assumption \ref{asm2.1} with the constant $L$,
$$
\left|G(S) - G(F^{(N)}(S))\right| \le \frac{4 L \|S\|}{N}, \quad \forall\ S\in \cC^+[0,T].
$$
\end{lem}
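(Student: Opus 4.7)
The plan is to insert two intermediate piecewise constant paths and apply the triangle inequality, controlling each step by one of the two parts of Assumption \ref{asm2.1}. Let $\hat S$ be the natural step approximation defined in \reff{e.shat}, and, when $n := H^{(N)}(S) \ge 2$, let $\bar S$ be the piecewise constant function carrying the same ordered sequence of values as $F^{(N)}(S)$ but with jump times at $\tau_k$ rather than $\hat\tau_k$; concretely, $\bar S_t = S_{\tau_k}$ on $[\tau_{k-1},\tau_k)$ for $k=1,\ldots,n-1$ and $\bar S_t = S_{\tau_{n-1}} + \frac{1}{N}\, sign(S_T - S_{\tau_{n-1}})$ on $[\tau_{n-1},T]$. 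Both $\bar S$ and $F^{(N)}(S)$ then lie in $\cD_{n-1}[0,T]$ with identical ordered values.

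The two sup-norm comparisons are easy. From the definition \reff{e.tau} of $\tau_k$, $|S_t - S_{\tau_k}| \le 1/N$ on $[\tau_k,\tau_{k+1})$, and at $t=T$ one checks $|\hat S_T - S_T| \le 1/N$, so $\|S - \hat S\| \le 1/N$. The function $\bar S$ is obtained from $\hat S$ by a single-cell left shift of the sequence of values, and since consecutive values $S_{\tau_{k+1}}$ and $S_{\tau_k}$ (as well as $S_0 = 1$ and $S_{\tau_1}$) differ by at most $1/N$, we also get $\|\hat S - \bar S\| \le 1/N$. The first part of Assumption \ref{asm2.1} therefore yields
$$
|G(S) - G(\hat S)| + |G(\hat S) - G(\bar S)| \le \frac{2L}{N}.
$$

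The third comparison invokes the time-Lipschitz part of Assumption \ref{asm2.1}:
$$
|G(\bar S) - G(F^{(N)}(S))| \le L\,\|F^{(N)}(S)\|\sum_{k=1}^{n-1}(\Delta\tau_k - \Delta\hat\tau_k) = L\,\|F^{(N)}(S)\|\,(\tau_{n-1}-\hat\tau_{n-1}),
$$
where the sum telescopes because $\hat\tau_0 = \tau_0 = 0$. The key technical point is the per-jump gap bound $\Delta\tau_k - \Delta\hat\tau_k \le 1/(2^k N)$: when $\Delta\tau_k \ge 1/(2^k N)$, it uses that the arithmetic part $\{j/(2^k N)\}_j \subset U_k^{(N)}$ has mesh exactly $1/(2^k N)$; when $\Delta\tau_k < 1/(2^k N)$, one bounds trivially by $\Delta\tau_k$, while the harmonic part $\{1/(j\,2^k N)\}_j$ of $U_k^{(N)}$ guarantees that the admissible set is non-empty. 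Summing the geometric series gives $\tau_{n-1} - \hat\tau_{n-1} < 1/N$, and since $\|F^{(N)}(S)\| \le \|S\| + 1/N$ by \reff{e.F}, the third term is bounded by $L\|S\|/N + L/N^2$.

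Combining the three estimates and using $\|S\| \ge S_0 = 1$ to absorb $1/N$ and $1/N^2$ into $\|S\|/N$ delivers the claimed $4L\|S\|/N$. The degenerate case $n=1$ is handled directly: then $F^{(N)}(S)$ is a single constant and $\|S - F^{(N)}(S)\| \le 2/N$ follows at once from \reff{e.tau}. The only delicate step is the per-jump gap estimate, whose proof is exactly where the two-component construction of $U_k^{(N)}$ (the arithmetic grid of mesh $1/(2^k N)$ together with a harmonic sequence accumulating at $0$) is essential; everything else is bookkeeping with the triangle inequality.
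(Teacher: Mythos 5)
Your proof is correct and follows essentially the same route as the paper: two sup-norm comparisons plus one time-Lipschitz comparison, hinging on the per-jump gap bound $\Delta\tau_k - \Delta\hat\tau_k \le 1/(2^k N)$ and the geometric summation. The only difference is cosmetic: the paper's intermediate function $\hat F$ carries $\hat S$'s values on the $\hat\tau_k$ grid (so the time-Lipschitz step sits in the middle of the chain), whereas your $\bar S$ carries $F^{(N)}(S)$'s values on the $\tau_k$ grid (so the time-Lipschitz step comes last); both are valid and give the same $4L\|S\|/N$ bound.
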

\begin{proof}
Set
$$
\hat F:=\hat{F}^{(N)}_t(S):=\sum_{k=0}^{n-1} \ S_{\tau_k}
\chi_{[\hat \tau_k,\hat \tau_{k+1})}(t)
+ \left[S_{\tau_{n-1}}
+\frac{1}{N} sign( S_T-
S_{\tau_{n-1}})\right]  \ \chi_{\{T\}}(t).
$$
Observe that $\hat S$ of \reff{e.shat} and $\hat F$ are
like the  functions $\upsilon$ and $\tilde \upsilon$ in that Assumption
\ref{asm2.1}. Hence,
$$
\left|G(\hat S) - G(\hat F)\right| \le  L \|S\| \ \sum_{k=1}^{n} \left|
\Delta \tau _k - \Delta \hat \tau_k\right|.
$$
For $k<n$,
$$
\Delta \hat \tau_k = \max \left\{ \Delta t\in U^{(N)}_k\ :\ \Delta t  < \Delta \tau_k\
\right\}.
$$
The definition of $U^{(N)}_k$ implies that
$$
0 \le \Delta \tau _k - \Delta \hat \tau_k\le \frac{1}{2^k N}, \quad k =1,\ldots, n-1.
$$
Therefore,
\begin{equation}
\label{e.est}
\sum_{k=1}^{n-1} \left| \Delta \tau _k - \Delta \hat \tau_k\right|
\le \sum_{k=1}^{\infty} \frac{1}{2^k N} =
\frac{1}{N}.
\end{equation}

Combining the above inequalities, we arrive at
$$
\left|G(\hat S) - G(\hat F)\right| \le  \frac{L \|S\|}{N}.
$$
Set $F=F^{(N)}(S)$ and directly
estimate that
\begin{eqnarray*}
\left|G(S) - G(F)\right| & \le & \left|G(S) - G(\hat  S)\right| + \left|G(\hat S)
- G(\hat F)\right| + \left|G(\hat F) -
G(F)\right|\\ &\le& L \|S - \hat S \| +   \frac{L \|S\|}{N} + \left|G(\hat F) - G(F)\right|\\
&=&   \frac{3L \|S\|}{N}+\left|G(\hat F) - G(F)\right|.
\end{eqnarray*} Finally, we observe that by construction,
$$
\left\|\hat F-F\right\|\leq\frac{1}{N}, \quad \Rightarrow \quad \left|G(F) - G(\hat F)\right|
\leq\frac{L}{N}.
$$
The above inequalities completes the proof of the lemma.
\end{proof}

\begin{rem} \label{r.1} {\rm{
The proof of the above Lemma provides one of the reasons behind the particular structure of $U_k^{(N)}$.  Indeed,
\reff{e.est} is a key estimate which provides a uniform upper bound for the sum of the differences over $k$.  Since there is no upper bound on $k$, the
approximating set $U_k^{(N)}$ for the $k$-th difference must depend on $k$.  Moreover, it should have a summable structure over $k$.  That explains the
terms $2^k$.

On the other hand, the  reason for the part $\{1/(i2^k N): i=1,2,\dots\}$ in
the definition of
$U_k^{(N)}$ is to make sure that $\Delta \hat \tau_k>0$. For probabilistic
reasons (i.e. adaptability), we want $\hat \tau_k < \tau_k$. This forces us to approximate $ \Delta \tau_k$ by $\Delta \hat \tau_k$ from below. This and
$\Delta \hat \tau_k>0$ would be possible only if  $U_k^{(N)}$
has a subsequence converging to zero.

Hence,  different sets of $U^{(N)}_k$'s are also 
possible provided that they have these two properties.}}
\qed
\end{rem}

\subsection{A countable probabilistic structure} \label{ss.countable}

An essential step in the proof of \reff{2.10} is a duality result for probabilistic problems. We first introduce this structure and then relate it to the
problem $V_N$.

As before, let $\hat\Omega:=\mathbb{D}[0,T]$ be the space of all right continuous functions $f:[0,T]\rightarrow\mathbb{R}_{+}$ with left--hand limits
($c\grave{a}dl\grave{a}g$ functions). Denote by $\hat {\mathbb S}=(\hat {\mathbb S}_t)_{0\leq t\leq T}$ the canonical process on the space $\hat \Omega$.

The set $\D^{(N)}$ defined in Definition \ref{d.DN} is a countable subset of $\hat \Omega$. We choose any probability measure $\hat{\P}^{(N)}$  on
$\hat \Omega$ which satisfies $\hat{\P}^{(N)}\left(\D^{(N)}\right)=1$ and $\hat{\P}^{(N)}(\{f\})>0$ for all $f\in \D^{(N)}$. Let
$\hat{\mathcal{F}}^{(N)}_t$, $t\in [0,T]$ be the filtration
 generated by the process
$\hat{\mathbb S}$ and contains $\hat{\P}^{(N)}$ null sets. Under the measure $\hat{\P}^{(N)}$, the canonical map $\hat{\mathbb S}$ has finitely many
jumps. Let $$ 0= \hat{\tau}_0(\hat{\mathbb S}) <\hat{\tau}_1(\hat{\mathbb S})<\ldots <\hat{\tau}_{\hat{H}(\hat{\mathbb S})}(\hat{\mathbb S})<T, $$ be the
jump times of $\hat{\mathbb S}$.  Note that in Definition \ref{d.DN}, the final jump time is always strictly less than $T$.

A trading strategy on the filtered probability space
$(\hat\Omega,\hat{\mathcal{F}}^{(N)},
(\hat{\mathcal{F}}^{(N)}_t)_{t=0}^T,
\hat{\mathbb
P}^{(N)})$ is a predictable stochastic process $(\hat\gamma_t)_{t=0}^T$. Thus, its a function $\hat\gamma: \D[0,T] \to \cD[0,T]$. Let $a\in \cD[0,T]$ be
such that $a\notin\hat\gamma(\D^{(N)})$. Define a  map $ \phi: \D[0,T] \to \cD[0,T],$ by $\phi(\omega)=\hat\gamma(\omega)$ if $\omega \in
\D^{(N)}$, and equal to $a$ otherwise. Clearly, $\hat{\P}^{(N)}$ almost surely, $\hat\gamma=\phi(\hat{\mathbb S})$. Also, since
$\hat \P^{(N)}$ is non-zero on every point
in $\D^{(N)}$, the definition
of the predictable sigma algebra implies that
$\phi$ is a predictable map. Namely,
for any $v, \tilde v \in \D[0,T]$ and $t\in [0,T]$
$$
v_u=\tilde{v}_u \ \forall u \in[0,t) \ \ \Rightarrow \ \  \phi(v)_t=\phi(\tilde v)_t.
$$ 
Indeed, arguing by contraposition, if there were
$t\in [0,T]$ and $v,\tilde v\in \D^{(N)}$
such that $v_u=\tilde{v}_u$ for all $u \in[0,t)$ and $\phi(v)_t\neq\phi(\tilde
v)_t$. Then, we would conclude that the event 
$\{\hat\gamma_t=\phi(v)_t\}\not\in \hat{\mathcal{F}}^{(N)}_{t-}$.  However, this 
would be 
in contradiction with the predictability of the process $\hat\gamma$. 
(Recall that ${\mathcal{F}}^{(N)}_{t-}$ is the smallest $\sigma$--algebra which contains 
${\mathcal{F}}^{(N)}_{s}$ for any $s<t$).
Hence, any predictable process $\hat \gamma$ has a version $\phi$
that is progressively
measurable in the sense of Definition \ref{dfn2.1}.
In what follows, we always
use this progressively measurable version
of any predictable process. In particular,
the following can be seen as
the probabilistic counterpart of the Definition \ref{dfn2.1}.

\begin{dfn} \label{dfn3.1}  {\rm{1.  A (probabilistic)}} semi-static portfolio
 {\rm{is a pair $(h,\hat \gamma)$
such that $ \hat \gamma :\D[0,T] \to \cD[0,T]$ {{is predictable}} and the stochastic integral $\int_{0}^{\cdot} \hat\gamma_{u} d\hat{\mathbb S}_u$ exists
(with respect to the measure $\mathbb{P}^{(N)}$), and $h:{A^{(N)}}\rightarrow \mathbb{R}$.

\noindent 2. A semi-static portfolio is}} $\hat{\P}^{(N)}$-admissible, {\rm{if $h$ is bounded and there exists $M>0$ such that \begin{equation}
\label{3.0} \int_{0}^t \hat\gamma_{u} d\hat{\mathbb S}_u \ge -M , \ \ \hat{\P}^{(N)}-{a.s.}, \ \ t\in [0,T]. \end{equation}

\noindent 3. An admissible semi-static portfolio is}} $\hat{\P}^{(N)}$-super-replicating, {\rm{if}} \begin{equation} \label{3.1} h(\hat {\mathbb
S}_T)+\int_{0}^T \hat \gamma_{u} d\hat{\mathbb S}_u\geq G(\hat{\mathbb S}),
 \ \ \hat{\P}^{(N)}-{a.s.}
\end{equation} \qed \end{dfn}

\subsection{Approximating $\mu$} \label{ss.mu}

Recall the set $\cA_N$ of portfolios used in the definition of $V_N$ in subsection \ref{ss.app}.

Next we provide a connection between the 
probabilistic super-replication and the discrete robust problem. 
However, the option $h$ in the
Definition \ref{dfn3.1} above is
defined only on $A^{(N)}$ while the static part of the hedges in $\cA_N$ are 
functions defined on $\R_+$.  So for a given $h:A^{(N)} \to \R$, we define the
following operator 
$$ 
g^{(N)}:= \cL^{(N)}(h): \R_+ \to \R 
$$ 
by 
$$ 
g^{(N)}(x) := (1+ \lfloor Nx \rfloor-Nx) h( \lfloor Nx \rfloor/N) 
+ (Nx- \lfloor Nx
\rfloor) h( (1+\lfloor Nx \rfloor)/N), 
$$ where for a real number $r$, $\lfloor r \rfloor$ is the largest integer that is not
larger than $r$.

Next, define a measure $\mu^{(N)}$ on the set $A^{(N)}$ by 
$$ 
\mu^{(N)}(\{0\}):=\int_{0}^{{1/N}} \left(1-N x\right)d\mu(x) 
$$ 
and for any positive integer
$k$,
$$ 
\mu^{(N)}(\{k/N\}):= \int_{{(k-1)/N}}^{{k /N}} \left(N x+1-k\right)d\mu(x)
 + \int_{{k/N}}^{{(k+1)/N}} \left(1+k-N x\right)d\mu(x). 
 $$ 
 This
construction has the following important property. For any  bounded function 
$h: A^{(N)}\to \mathbb{R}$, let $g^{(N)}=\cL^{(N)}(h)$ be as above. Then,
\begin{equation} 
\label{3.17} 
\int h d\mu^{(N)}=\int g^{(N)}d\mu. 
\end{equation} In particular, by taking $h\equiv 1$, we conclude that $\mu^{(N)}$ is a
probability measure. Also, since for continuous $h$, 
$g^{(N)}$ converges pointwise to $h$, one may directly show 
(by Lebesgue's dominated convergence theorem) that $\mu^{(N)}$
converges weakly to $\mu$.

\subsection{Probabilistic super-replication.}

We now introduce the super-replication problem by requiring that the inequalities
 in \reff{3.1} hold $\hat{\P}^{(N)}$-almost surely.
Let $G$ be a European claim as before and $N$ be a positive integer. Then, the 
probabilistic super-replication problem is given by, 
$$ 
\hat{V}_N(G)= \inf
\left\{\int h d\mu^{(N)}: \exists\  \hat{\gamma} \ {\mbox{s.t.}}\
 (h,\hat\gamma)
 \ \mbox{is a}\
 {\hat{\P}^{(N)}}\
 {\mbox{admissible super
 hedge of}}\ G
 \right\}.
$$ 
Recall that in the probabilistic structure, admissibility and related notions are 
defined in Definition \ref{dfn3.1}.

We continue by establishing a connection between the probabilistic super hedging
$\hat{V}_N$ and the discrete robust problem $V_N$.  Suppose that we are
given a probabilistic semi-static portfolio $\hat \pi = (h, \hat \gamma)$ in the sense of 
Definition \ref{dfn3.1}. We  lift this portfolio to a semi-static
portfolio $\pi^{(N)} = (g^{(N)}, \gamma^{(N)}) \in \mathcal{A}_N$. Indeed, let 
$g^{(N)}=\cL^{(N)}(h)$ be as in subsection
\ref{ss.mu} and define
$\gamma^{(N)}:\mathcal{C}^{+}[0,T]\rightarrow\cD[0,T]$ by
$$
\gamma^{(N)}_t(S)= \sum_{k=1}^{{n-1}} \hat \gamma_{\hat\tau_k}
\left(F^{(N)}(S)\right)
\chi_{(\tau_k,\tau_{k+1}]}(t),
$$
where $\tau_k=\tau_k^{(N)}(S)$  are
 as in \reff{e.tau}, $n$ is as in \reff{e.n} and $F^{(N)}(S)$, $\hat \tau_k:=\hat
\tau_k(S)$ are as in \reff{e.F}.  Note that
the random integer $n$ is the number of crossings of magnitude
of no less than $1/N$.  Moreover, by construction it is exactly one more
than the number of jumps of $F^{(N)}$. Also notice that
$$
\gamma^{(N)}_t (S)=0, \qquad \forall\ t \in [0,\tau_1].
$$

\begin{lem}
\label{l.mbl}
For any probabilistic semi-static portfolio
$(h,\hat \gamma)$,  $\gamma^{(N)}$ defined above is progressively 
measurable in the sense of \reff{e.adapted}.
\end{lem}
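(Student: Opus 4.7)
The plan is to fix $t\in[0,T]$ and two paths $S,\tilde S\in\cC^+[0,T]$ agreeing on $[0,t]$, and to verify the defining implication of \reff{e.adapted} for $\gamma^{(N)}$ directly. The first step is to pin down the unique summand of
$$
\gamma^{(N)}_t(S) = \sum_{k=1}^{n-1} \hat\gamma_{\hat\tau_k}\bigl(F^{(N)}(S)\bigr)\,\chi_{(\tau_k,\tau_{k+1}]}(t)
$$
that is active at time $t$. I set $k^*:=\max\{k\ge 0:\tau_k(S)\le t\}$. Either $k^*=0$, in which case $t\le\tau_1(S)$ and both $\gamma^{(N)}_t(S)$ and $\gamma^{(N)}_t(\tilde S)$ vanish as noted in the text; or $k^*\ge 1$ and $t\in(\tau_{k^*}(S),\tau_{k^*+1}(S)]$. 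Since $\tau_{k^*+1}(S)>t$ forces $|S_u-S_{\tau_{k^*}(S)}|<1/N$ for every $u\in(\tau_{k^*}(S),t]$, the same bound passes to $\tilde S$ because $\tilde S=S$ on $[0,t]$, so $\tau_{k^*+1}(\tilde S)>t$, while a short induction on $j$ shows $\tau_j(\tilde S)=\tau_j(S)$ for every $j\le k^*$. Hence the same index $k^*$ is active for both paths.

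Next I would show that the (unique) active summand is the same function of $S$ and $\tilde S$. Since $\hat\tau_j$ is obtained for $j\le k^*$ by the deterministic recipe $\Delta\hat\tau_i=\max\{\Delta t\in U^{(N)}_i:\Delta t<\Delta\tau_i\}$ applied to the shared increments $\Delta\tau_1,\dots,\Delta\tau_{k^*}$, the time $\hat\tau_{k^*}$ coincides for the two paths. On $[0,\hat\tau_{k^*})=\bigcup_{j=1}^{k^*}[\hat\tau_{j-1},\hat\tau_j)$, the definition \reff{e.F} gives $F^{(N)}(S)_u=S_{\tau_j(S)}$ on the $j$-th plateau, and $S_{\tau_j(S)}=\tilde S_{\tau_j(\tilde S)}$ for every $j\le k^*$ because $\tau_j(S)\le t$ and the paths agree on $[0,t]$. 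Consequently $F^{(N)}(S)$ and $F^{(N)}(\tilde S)$ coincide on $[0,\hat\tau_{k^*})$.

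Finally, I would invoke the predictability of $\hat\gamma$ recorded just before Definition \ref{dfn3.1}: any two arguments that coincide on $[0,s)$ yield the same value of $\hat\gamma_s$. Applied with $s=\hat\tau_{k^*}$ and the arguments $F^{(N)}(S)$, $F^{(N)}(\tilde S)$, this gives $\gamma^{(N)}_t(S)=\gamma^{(N)}_t(\tilde S)$, which is exactly \reff{e.adapted}. The step I expect to cause the most confusion is the index shift built into \reff{e.F}: on the plateau $[\hat\tau_{j-1},\hat\tau_j)$ the function $F^{(N)}$ takes the value $S_{\tau_j}$, so the plateau indexed by $k^*-1$ already encodes $S_{\tau_{k^*}}$. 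This shift, highlighted after \reff{e.shift}, is precisely what allows the predictable strategy $\hat\gamma$ to see $S_{\tau_{k^*}}$ at time $\hat\tau_{k^*}$, matching the fact that the original hedge $\gamma^{(N)}$ on the interval $(\tau_{k^*},\tau_{k^*+1}]$ may legitimately depend on $S_{\tau_{k^*}}$. Once this shift is internalised, the remainder of the argument is pure bookkeeping.
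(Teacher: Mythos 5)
Your argument is essentially the paper's proof: identify the active summand, use the fact that the $\tau_j$ and hence $\hat\tau_j$ are path-functionals determined on $[0,t]$, show that $F^{(N)}(S)$ and $F^{(N)}(\tilde S)$ coincide on the half-open interval up to the relevant $\hat\tau$, and invoke predictability of $\hat\gamma$. One small bookkeeping slip: with $k^*=\max\{k:\tau_k(S)\le t\}$ the claim $t\in(\tau_{k^*},\tau_{k^*+1}]$ fails when $t=\tau_{k^*}$ (then $t\in(\tau_{k^*-1},\tau_{k^*}]$ and the active summand is $k^*-1$, or none at all when $k^*=1$); the paper sidesteps this by defining $k_t(S):=\min\{i\ge 1:\tau_i\ge t\}-1$. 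Your coincidence statement $F^{(N)}(S)=F^{(N)}(\tilde S)$ on $[0,\hat\tau_{k^*})$ is strong enough to cover either index, so the conclusion is unaffected, but the interval identification should be corrected.
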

\begin{proof}
Let $S, \tilde S \in \cC^+[0,T]$ and $t\in [0,T]$ be such that $S_u= \tilde S_u$ for all $u\leq t$ . We
need to show that
$$
\gamma^{(N)}_t(S) = \gamma^{(N)}_t(\tilde S).
$$
Since the above clearly holds for $t=0$ and $t=T$, we may assume that $t\in (0,T)$.
Set
$$
k_t(S):=k_t^{(N)}(S):= \min \{ i\geq 1, :
 \tau^{(N)}_i \geq t\ \}-1,
$$
so that $0{{\le}}k_t(S) < n$ and
$$
t \in (\tau^{(N)}_{k_t(S)}, \tau^{(N)}_{k_t(S)+1}].
$$
It is clear that $k_t(S)=k_t(\tilde S)$. {{If
$k_t(S)=k_t(\tilde S)=0$, then $\gamma^{(N)}_t(S) = \gamma^{(N)}_t(\tilde S)=0$.
So we assume that $k_t(S)>0$}} and use the definition of $\hat \tau_k$ to
conclude that
$$
\theta:=\hat \tau_{k_t(S)}= \hat \tau_{k_t(\tilde S)}(\tilde S).
$$
Since $0<k_t(S)<n$, we have $n>1$ and $ F^{(N)}_t$ is given by \reff{e.F}, i.e.,
$$
 F^{(N)}_t(S)= \sum_{k=1}^{n-1} \
 S_{\tau_{k}} \chi_{[\hat \tau_{k-1},\hat \tau_{k})}(t)
 + \left(S_{\tau_{n-1}}+\frac{1}{N} sign( S_T- S_{\tau_{n-1}})\right)
 \ \chi_{[\hat\tau_{n-1},T]}(t).
$$
Now, for any $u<\theta= \hat \tau_{k_t(S)}= \hat \tau_{k_t(\tilde S)}(\tilde S)$, the 
above definition
implies that
$$
F_u^{(N)}(S)=S_{\tau_k}\ \ F_u^{(N)}(\tilde S)=\tilde S_{\tau_k},
\quad {\mbox{for some}}\quad
k \le k_t(S)=k_t(\tilde S).
$$
Since by definition $\tau_{k_t(S)}(S)<t$, we conclude that
$$
F^{(N)}_u(S) = F^{(N)}_u(\tilde S), \quad \forall \ u \in [0,\theta).
$$
Therefore, by the predictability of $\hat \gamma$ we
have $\gamma^{(N)}_t(S) =\gamma^{(N)}_t(\tilde S)$.
\end{proof}

The following  lemma provides a natural and a crucial
 connection between the probabilistic
super-replication and the discrete robust problem.

Recall the set $\cA_N$ of portfolios used in the definition
of $V_N$ in subsection \ref{ss.app}.

\begin{lem}
\label{lem3.1}
Suppose $G$ is bounded from above and satisfies the Assumption \ref{asm2.1}.
Then,
$$
\limsup_{N\rightarrow\infty}   V_N(G)\leq
\ \limsup _{N\rightarrow\infty} \hat V_N(G).
$$
\end{lem}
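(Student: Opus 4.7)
The idea is to lift a near-optimal probabilistic super-hedge $(h_N,\hat\gamma_N)$ for $\hat V_N(G)$ to a path-wise super-hedge in $\cA_N$, picking up only vanishing extra cost for the discretization error. Fix $\delta>0$; for each $N$, choose a $\hat\P^{(N)}$-admissible super-hedge $(h_N,\hat\gamma_N)$ with $\int h_N\,d\mu^{(N)}\leq \hat V_N(G)+\delta$. Set $g^{(N)}:=\cL^{(N)}(h_N)$ and let $\gamma^{(N)}$ be the lifted strategy introduced right before Lemma \ref{l.mbl}. Lemma \ref{l.mbl} provides progressive measurability, the identity \reff{3.17} yields $\int g^{(N)}\,d\mu=\int h_N\,d\mu^{(N)}$, and the admissibility condition \reff{2.6+} for $\gamma^{(N)}$ follows from the boundedness of $h_N$ combined with the $\hat\P^{(N)}$-admissibility of $\hat\gamma_N$, so that $(g^{(N)},\gamma^{(N)})\in \cA_N$.

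The core of the argument is a path-wise comparison of $g^{(N)}(S_T)+\int_0^T\gamma^{(N)}\,dS$ with the discretized value $h_N(F^{(N)}(S)_T)+\int_0^T\hat\gamma\,dF^{(N)}(S)$. By the shift relation \reff{e.shift}, each real-world increment $S_{\tau_{k+1}}-S_{\tau_k}$ for $k=1,\ldots,n-2$ matches the corresponding jump $F^{(N)}_{\hat\tau_k}-F^{(N)}_{\hat\tau_{k-1}}$, so the two stochastic integrals coincide term by term on $[0,\tau_{n-1}]$; the first move $S_{\tau_1}-1=\pm 1/N$ is absorbed into the nontrivial initial condition $F^{(N)}(S)_0\in\{1\pm 1/N\}$, consistent with $\gamma^{(N)}\equiv 0$ on $[0,\tau_1]$. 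The only remaining mismatch is on the last leg $(\tau_{n-1},T]$, where the real-world gain $\eta_{n-1}\ve$, with $\ve:=S_T-S_{\tau_{n-1}}\in[-1/N,1/N]$ and $\eta_{n-1}:=\hat\gamma_{\hat\tau_{n-1}}(F^{(N)}(S))$, differs from the discrete gain $\eta_{n-1}\cdot(1/N)\operatorname{sign}(\ve)$, while $g^{(N)}(S_T)$, being the linear interpolation of $h_N$ between the grid points bracketing $S_T$, differs from $h_N(F^{(N)}(S)_T)$ by a multiple of a local forward difference of $h_N$.

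To handle the endpoint mismatch, invoke the probabilistic super-hedge inequality at the two partner paths $\omega_+,\omega_-\in\D^{(N)}$ that coincide with $F^{(N)}(S)$ up to $\hat\tau_{n-2}$ and have final jumps $+1/N$ and $-1/N$, respectively; this is valid since $\hat\P^{(N)}$ charges every point of $\D^{(N)}$. By predictability, both inequalities share the same $\eta_{n-1}$, so the convex combination with weight $\alpha=(1+N\ve)/2$ on the $\omega_+$-inequality aligns the dynamic terms with $\eta_{n-1}\ve$ and cancels the $\eta_{n-1}$ dependence. Combining this with Lemma \ref{l.1}, which gives $|G(S)-G(F^{(N)}(S))|\le 4L\|S\|/N$, and the Lipschitz bound $|G(\omega_+)-G(\omega_-)|\le 2L/N$ from Assumption \ref{asm2.1}, one obtains
$$
g^{(N)}(S_T)+\int_0^T\gamma^{(N)}\,dS \;\geq\; G(S)-\frac{4L\|S\|+2L}{N}-R_N(S),
$$
where $R_N(S)$ is a localized residual of the form $(1-N|\ve|)\Delta^2 h_N(S_{\tau_{n-1}})/2$ involving a discrete second difference of $h_N$. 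The first error is super-hedged in $\cA_N$ at cost $o(1)$ by Lemma \ref{l.bound} applied with $K=N/(4L)$, and $R_N$ is absorbed by a small dynamic ``butterfly'' trade added on $(\tau_{n-1},T]$ whose initial cost vanishes as $N\to\infty$.

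Combining all pieces gives $V_N(G)\leq\int g^{(N)}\,d\mu+o(1)=\int h_N\,d\mu^{(N)}+o(1)\leq \hat V_N(G)+\delta+o(1)$; taking $\limsup_{N\to\infty}$ and then $\delta\downarrow 0$ proves the lemma. The \emph{main obstacle} is the last-leg mismatch: it is proportional to the a priori unbounded $\eta_{n-1}$, which cannot be controlled by the admissibility estimate alone. The essential device is the pair of super-hedge inequalities at the partner paths $\omega_\pm$, which share $\eta_{n-1}$ and admit a convex combination that algebraically eliminates it, after which the Lipschitz regularity of $G$ and Lemma \ref{l.bound} reduce the remaining second-order residual to an $o(1)$ super-hedging cost.
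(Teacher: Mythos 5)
Your high-level strategy agrees with the paper's: take a near-optimal probabilistic super-hedge $(h_N,\hat\gamma_N)$, lift it via $g^{(N)}=\cL^{(N)}(h_N)$ and the map $\gamma^{(N)}$ of Lemma \ref{l.mbl}, control the discretization error with Lemma \ref{l.1}, and absorb that error using sub-additivity of the super-replication cost together with Lemma \ref{l.bound}. The device of comparing against two partner paths in $\D^{(N)}$ that share $\hat\gamma_{\hat\tau_{n-1}}$ by predictability is also the right one. The gap is in \emph{which} pair you take. You choose $\omega_\pm$, both carrying a final jump of $\pm 1/N$, weighted by $\alpha=(1+N\ve)/2$. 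This does reproduce the dynamic term $\eta_{n-1}\ve$, but for the static part one computes, with $x=S_{\tau_{n-1}}$,
$$
\alpha\, h_N(\omega_{+,T})+(1-\alpha)\, h_N(\omega_{-,T})
= g^{(N)}(S_T)+\frac{1-N|\ve|}{2}\left[h_N\left(x+\frac{1}{N}\right)-2h_N(x)+h_N\left(x-\frac{1}{N}\right)\right],
$$
because $g^{(N)}=\cL^{(N)}(h_N)$ interpolates $h_N$ between the two grid points \emph{adjacent} to $S_T$, whereas your combination interpolates across a $2/N$-wide bracket. The second-difference residual $R_N$ is not controllable: $h_N$ is only assumed bounded, with bound a priori as large as $O(N)$ (cf.\ the set $\cZ$ in Lemma \ref{lem3.2}); it sits at the random level $S_{\tau_{n-1}}$; and, as a function of $\ve$, it is a symmetric tent. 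No ``small dynamic butterfly'' can super-replicate it at vanishing cost: a dynamic position in $S$ over $(\tau_{n-1},T]$ has payoff linear in $\ve$ given the past, while $R_N$ is not, and a static butterfly centered at $x\pm 1/N$ cannot be written at time~$0$ since $x$ is random and would in any case carry strictly positive cost.

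The paper's resolution is to pair $F=F^{(N)}(S)$, whose last jump has sign $\mathrm{sign}(\ve)$, with the path $\bar F$ that agrees with $F$ before its last jump and then stays flat, so $\bar F_T=S_{\tau_{n-1}}$, and to take $\hat\lambda=N|\ve|$ on $F$. Since the two grid points bracketing $S_T$ are exactly $\bar F_T$ and $F_T$, the definition of $\cL^{(N)}$ gives $\hat\lambda h_N(F_T)+(1-\hat\lambda) h_N(\bar F_T)=g^{(N)}(S_T)$ \emph{exactly}, so no residual appears; together with $\|F-\bar F\|\le 1/N$ and Lemma \ref{l.1} this yields $g^{(N)}(S_T)+\int_0^T\gamma^{(N)}dS\ge G(S)-5L\|S\|/N$, and the rest of your argument then goes through. (Note also that your one-line justification of the admissibility bound \reff{2.6+} for $\gamma^{(N)}$ is too brief: the paper verifies it at every intermediate time $t$ with a separate convex-combination argument using a reflected partner path $\tilde F$.)
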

\begin{proof}

Set $$ G^{(N)}(S):= G(S)-\frac{5 L \|S\|}{N}. $$ We first show that $$ V_N\left(G^{(N)}\right) \le \hat V_N(G). $$

To prove the above inequality, suppose  that a portfolio $(h, \hat{\gamma})$ is a $
\hat{\P}^{(N)}$-admissible super  hedge of  $G$. Then it suffices
to construct a map $\gamma^{(N)}:\mathcal{C}^{+}[0,T]\rightarrow\cD[0,T]$
and $g^{(N)} : \R_+ \to \R$
such that the semi-static portfolio $\pi^{(N)}:=(g^{(N)},\gamma^{(N)})$ is
admissible, belongs to $\mathcal{A}_N$ and super-replicates $G^{(N)}$ in the sense
of Definition \ref{dfn2.1}.

Let $g^{(N)}= \cL^{(N)}(h)$ be as in subsection \ref{ss.mu} and $\gamma^{(N)}$ be
the probabilistic portfolio considered in Lemma \ref{l.mbl}. We claim that $\pi^{(N)}$
is the desired portfolio. In view of Lemma \ref{l.mbl}, we need  to show that $\pi^{(N)}
$ is in $\mathcal{A}_N$ and super-replicates the $G^{(N)}$ in the
sense of Definition \ref{dfn2.1}.

To simplify the notation, we set $F:= F^{(N)}(S)$. 
\vspace{10pt}

{\em{Admissibility of $\gamma^{(N)}$.}}
By construction trading is only at the random times $\tau_k$'s. Therefore, $\pi^{(N)}
\in \mathcal{A}_N$ provided that it satisfies the lower bound
\reff{2.6+} for every $t \in [0,T]$.
We first
claim that for any $S\in \cC^+[0,T]$ and for every $k \le n-1$,
$$
\int_0^{\tau_k} \gamma^{(N)}_u(S) dS_u = \int_{[0,{\hat
\tau_{k-1}}]} \hat{\gamma}_{u}(F) dF_u.
$$
Since $\gamma^{(N)} \equiv 0$ on $[0,\tau_1]$, the above trivially
holds for $k=1$.  So we assume that $1< k \le n-1$.  In particular,
$n >2$.   Then, we use \reff{e.shift} and the definitions to compute that
\begin{eqnarray*}
 \int_{[0,{\hat \tau_{k-1}}]}
 \hat{\gamma}_{u}(F) dF_u &=&
\sum_{m=1}^{k-1} \hat \gamma_{\hat \tau_m}(F) \left(F_{\hat \tau_m} - F_{\hat 
\tau_{m-1}}\right)= \sum_{m=1}^{k-1} \hat \gamma_{\hat \tau_m}(F)
\left(S_{\tau_{m+1}} - S_{\tau_{m}}\right)\\ 
&=& \sum_{m=1}^{k}  \gamma^{(N)}_{ \tau_{m+1}}(F) 
\left(S_{ \tau_{m+1}} - S_{\tau_{m}}\right)
=\int_{\tau_1}^{\tau_k} \gamma^{(N)}_u(S) dS_u\\ 
&=&\int_{0}^{\tau_k} \gamma^{(N)}_u(S) dS_u.
\end{eqnarray*}
The last identity follows from the fact that
$\gamma^{(N)}$ is zero on the interval $[0,\tau_1]$.

Now, for a given $t \in [0,T{{)}}$ and $ S \in \cC^{+}[0,T]$, let $k{{\le n-1}}$ be the 
largest integer so that $\tau_k \le t$. Construct a function
$\tilde F \in \D^{(N)}$
 by,
$$ 
\tilde F_{[0,\hat \tau_{k})}=F_{[0,\hat \tau_{k})}, \ {\mbox{(i.e.,}} 
\quad \tilde  F_u=F_u, \ \forall \ u \in [0,\hat \tau_{k}),
 {\mbox{)}}
 $$
and
$$
\tilde F_{u}=2F_{\hat\tau_{k-1}}- F_{\hat\tau_{k}}, \ \ u\geq \hat\tau_{k}.
$$
Note that the constructed function $\tilde F$ depends on $S$ and $N$,
since both $F$ and the stopping times $\tau_k$ depend on them.  But we suppress 
these dependences.  Since
$$
\tilde F_{\hat\tau_{k}}- \tilde
F_{\hat\tau_{k-1}} = -\left[F_{\hat\tau_{k}}-F_{\hat\tau_{k-1}}\right] = \pm 1/N,
$$
and since
$$
\left| S_t-S_{\tau_k} \right| \le  1/N,
$$ there exists
$\lambda \in [0,1]$ (depending on $t$)  such that
$$
S_t-S_{\tau_k}=\lambda (F_{\hat\tau_{k}}-F_{\hat\tau_{k-1}}) +(1-\lambda) (\tilde
F_{\hat\tau_{k}}-\tilde F_{\hat\tau_{k-1}}).
$$
Since $F$ and $\tilde F$ agree on $[0,\hat\tau_{k})$ and $\hat{\gamma}$ is 
predictable,
$\hat{\gamma}_u(F)=\hat{\gamma}_u(\tilde F)$ for all $u \leq \hat\tau_{k}$.
Also, for $u \in (\tau_k,t) \subset (\tau_k, \tau_{k+1})$,
$\gamma^{(N)}_u(S)=\hat \gamma_{\hat \tau_k}(F)$ and
\begin{eqnarray*}
\int_0^t \gamma_u^{(N)}(S) dS_u &=&
\int_0^{\tau_k}\gamma_u^{(N)}(S) dS_u +
\int_{\tau_k}^t \gamma_u^{(N)}(S) dS_u \\
 &=&
\int_{[0,\hat \tau_{k-1}]}\hat \gamma_u(F) dF_u +
\hat \gamma_{\hat \tau_k}(F) [S_t - S_{\tau_k}].
\end{eqnarray*}
Since $F$ is piece-wise constant with jumps
only at
the stopping times $\hat \tau_i$'s,
\begin{eqnarray*}
\int_{[0, \hat \tau_k]} \hat \gamma_u(F) dF_u &=&
\int_{[0,\hat \tau_{k-1}]}\hat \gamma_u(F) dF_u +
\int_{(\hat \tau_{k-1},\hat \tau_k]}\hat \gamma_u(F) dF_u\\
&=&\int_{[0,\hat \tau_{k-1}]}\hat \gamma_u(F) dF_u +
\hat \gamma_{\hat \tau_k}
[F_{\hat \tau_k}-F_{\hat \tau_{k-1}}].
\end{eqnarray*}
We calculate the same integral for $\tilde F$
using the fact that $F=\tilde F$ on $[0,\tau_k)$.
The result is
$$
\int_{[0, \hat \tau_k]} \hat \gamma_u(\tilde F) d\tilde F_u =
\int_{[0,\hat \tau_{k-1}]}\hat \gamma_u(F) dF_u +
\hat \gamma_{\hat \tau_k}
[\tilde F_{\hat \tau_k}-\tilde F_{\hat \tau_{k-1}}].
$$
Therefore,
$$
\int_0^{t} \gamma^{(N)}_u(S) dS_u =
\lambda\int_{[0,{\hat\tau_{k}}]} \hat{\gamma}_{u}(F) dF_u 
+(1-\lambda) \int_{[0,\hat\tau_{k}]} \hat{\gamma}_{u}(\tilde F) d\tilde F_u.
$$
Since $F, \tilde F \in \D^{(N)}$ and
$\hat \P^{(N)}(F), \hat \P^{(N)}(\tilde F) >0$,
 \reff{3.0} imply that
$$
\int_{[0,{\hat\tau_{k}}]}
\hat{\gamma}_{u}(F) dF_u \ge -M,\quad {\mbox{and}} 
\quad \int_{[0,\hat\tau_{k}]} \hat{\gamma}_{u}(\tilde F) d\tilde F_u \ge -M.
$$
Hence, $\gamma^{(N)}$ satisfies
\reff{2.6+} and $\pi^{(N)} \in \cA_N$.
\vspace{10pt}

{\em{Super-replication.}}
We need to show that 
$$ 
g^{(N)}(S_T) + \int_0^T \gamma^{(N)}_u(S) dS_u \ge G^{(N)}(S). 
$$ 
We proceed almost exactly as in the proof of admissibility. Again
we define a modification $\bar F\in \D^{(N)}$ 
by $\bar{F}_{[0,\hat\tau_{n-2})}=F_{[0,\hat\tau_{n-2})}$ 
and $\bar{F}_u=\bar{F}_{\hat\tau_{n-2}}$ for 
$u\geq
\hat\tau_{n-2}$. Set 
$$ 
\hat\lambda:=N |S_T-S_{\tau_{n-1}}|. 
$$ 
Then $\hat\lambda \in [0,1]$ and by the construction of $g^{(N)}$, $$ g^{(N)}(S_T) =
\hat\lambda h(F_T) +(1-\hat\lambda) h(\bar F_T). $$ Hence, 
\begin{eqnarray*} 
g^{(N)}(S_T) &+ & \int_0^T \gamma^{(N)}_u(S) dS_u \\
&=& \hat\lambda\left[
h(F_T) + \int_0^T \hat{\gamma}_{u}(F) dF_u\right] 
+ (1-\hat\lambda) \left[h(\bar F_T) 
+ \int_0^T \hat{\gamma}_{u}(\bar F) d\bar F_u\right] \\ 
&\ge &
\hat\lambda G(F) + (1-\hat\lambda) G(\bar F). 
\end{eqnarray*} 
Since $\|F-\bar F\| \le 1/N$, Assumption \ref{asm2.1} and Lemma \ref{l.1} imply that 
$$
\left|G(S)-G(\bar F)\right| \ge \left|G(S)-G( F)\right| 
+ \left|G(F)-G(\bar F)\right| \le  \frac{5L\|S\|}{N}. 
$$ 
Consequently, 
$$ 
\hat\lambda G(F) +
(1-\hat\lambda) G(\bar F) \ge G^{(N)}(S) 
$$ 
and we conclude that $\pi^{(N)}$ is super-replication $G^{(N)}$. 
\vspace{10pt}

{\em Completion of the proof.}
We have shown that 
$$ 
V_N(G-5 L \|S\|/N) \le \hat V_N(G). 
$$
 Moreover, the linearity 
of the market yields that the super-replication cost is sub-additive.
Hence, 
$$ 
V_N(G) \le V_N(5 L \|S\|/N)+ V_N(G-5L \|S\|/N). 
$$ 
Therefore, 
$$ 
V_N(G) \le V_N(5L \|S\|/N)+  \hat V_N(G). 
$$ 
Finally, by Lemma \ref{l.bound}, 
$$
\limsup_{N \to  \infty} \ V_N(5L \|S\|/N) =0. 
$$ 
We use the above inequalities to complete the proof of the lemma. 
\end{proof}

\subsection{First duality}

Recall the countable set $\D^{(N)}\subset \hat \Omega$ and its probabilistic structure 
were introduced in subsection \ref{ss.countable}. We consider two
classes of measures on this set.

\begin{dfn} \label{d.mart} {\rm{1.  We say that a probability measure $\mathbb{Q}$ on 
the space $(\hat\Omega,\hat{\mathcal F})$ is a}} martingale measure
{\rm{if the canonical process $(\hat{\mathbb S}_t)_{t=0}^T$ is a local martingale with 
respect to $\mathbb Q$.

\noindent 2.   $\mathbb{M}_N$ is the set of all martingale measures that are 
supported on $\D^{(N)}$.

\noindent 3.  For a given $K>0$, $\mathbb{M}^{(K)}_{N}$ is the set of all measures $
\mathbb{Q}\in \M_N$ that satisfy}} \begin{equation}\label{3.18+}
\sum_{k=0}^\infty\left|\mathbb{Q}\left({{\hat{\mathbb S}}}_T=k/N\right) -\mu^{(N)}
\left(\left\{k/N\right\}\right)\right|<\frac{K}{N}. \end{equation} \qed
\end{dfn}

The following follows from known duality results. We will combine it with Lemma 
\ref{lem3.1} and Proposition \ref{lem4.2}, which will be proved in the next
section to complete the proof of the inequality (\ref{2.10}).

\begin{lem}
\label{lem3.2}
Suppose that $G \ge 0$ is bounded from above by $K$ and satisfies the Assumption 
\ref{asm2.1}. Then, for any positive integer
$N$,
$$
\hat{V}_N(G) \leq \sup_{\mathbb Q\in
\mathbb{M}^{(K)}_{N}} \mathbb{E}_{\mathbb Q} \left[G(\hat{\mathbb S})\right].
$$
\end{lem}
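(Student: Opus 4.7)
The plan is to bound $\hat V_N(G)$ from above by an LP dual on the countable discrete space $\D^{(N)}$. Since $\hat{\mathbb P}^{(N)}$ has full support on $\D^{(N)}$, the $\hat{\mathbb P}^{(N)}$-a.s.\ super-replication inequality in Definition \ref{dfn3.1} is equivalent to a pointwise inequality on $\D^{(N)}$, which reduces the problem to a combinatorial LP. For any bounded claim $X$, let
\[
\pi^{(N)}(X) := \inf\Big\{x : \exists \text{ admissible } \hat\gamma,\ x + \int_0^T \hat\gamma_u\,d\hat{\mathbb S}_u \ge X \text{ on } \D^{(N)}\Big\}.
\]
Classical discrete-time super-replication duality yields $\pi^{(N)}(X) = \sup_{\mathbb Q \in \mathbb M_N} \mathbb E_{\mathbb Q}[X]$. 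For any bounded $h$, the portfolio $(h + \pi^{(N)}(G(\hat{\mathbb S}) - h(\hat{\mathbb S}_T)), \hat\gamma)$ is admissible and super-replicates $G$ for an appropriate $\hat\gamma$; fixing $C := N \vee K$ (which contains the trivial $h \equiv K$ within $\|h\|_\infty \le C$), this gives
\[
\hat V_N(G) \le \inf_{\|h\|_\infty \le C} \Big\{\int h\,d\mu^{(N)} + \pi^{(N)}(G - h(\hat{\mathbb S}_T))\Big\} = \inf_{\|h\|_\infty \le C} \sup_{\mathbb Q \in \mathbb M_N} \Big\{\mathbb E_{\mathbb Q}[G(\hat{\mathbb S})] + \int h\,d(\mu^{(N)} - \mathbb Q_T)\Big\},
\]
where $\mathbb Q_T$ denotes the law of $\hat{\mathbb S}_T$ under $\mathbb Q$.

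The next step is a minimax swap (Kellerer's theorem, as in \cite{BHLP}, or Fan's). The ball $\{\|h\|_\infty \le C\}$ is weak-$*$ compact in $\ell^\infty(A^{(N)})$ by Banach--Alaoglu; the integrand is affine in both variables and weak-$*$ continuous in $h$ because $\mu^{(N)}, \mathbb Q_T \in \ell^1(A^{(N)})$. After swapping, the inner infimum over $h$ is elementary and equals $-C\,d(\mathbb Q_T, \mu^{(N)})$, where $d(\nu,\nu') := \sum_k |\nu(\{k/N\}) - \nu'(\{k/N\})|$. Hence
\[
\hat V_N(G) \le \sup_{\mathbb Q \in \mathbb M_N} \Big\{\mathbb E_{\mathbb Q}[G(\hat{\mathbb S})] - C\,d(\mathbb Q_T, \mu^{(N)})\Big\}.
\]
With $C = N \vee K \ge N$ and $0 \le G \le K$, any $\mathbb Q \notin \mathbb M_N^{(K)}$ satisfies $d(\mathbb Q_T, \mu^{(N)}) \ge K/N$, so $\mathbb E_{\mathbb Q}[G] - C\,d(\mathbb Q_T, \mu^{(N)}) \le K - CK/N \le 0 \le \sup_{\mathbb Q \in \mathbb M_N^{(K)}} \mathbb E_{\mathbb Q}[G(\hat{\mathbb S})]$, where the last quantity is non-negative because $\mathbb M_N^{(K)}$ contains martingales on $\D^{(N)}$ with exact terminal marginal $\mu^{(N)}$ (they exist since $\int x\,d\mu^{(N)} = 1$). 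For $\mathbb Q \in \mathbb M_N^{(K)}$ the penalty only decreases the objective, so the total supremum is bounded by $\sup_{\mathbb Q \in \mathbb M_N^{(K)}} \mathbb E_{\mathbb Q}[G(\hat{\mathbb S})]$, which is the claim.

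The main obstacle will be proving the dynamic-only duality $\pi^{(N)}(X) = \sup_{\mathbb Q \in \mathbb M_N} \mathbb E_{\mathbb Q}[X]$ on the infinite-dimensional space $\D^{(N)}$: paths can carry arbitrarily many jumps, so the ambient LP is genuinely infinite-dimensional and does not reduce immediately to standard finite-state super-replication theory. I expect this to require an exhaustion argument, first working on finite sub-markets obtained by capping the number of jumps and restricting to a finite value range, applying classical finite-dimensional LP duality there, and then passing to the limit. Once this duality is in hand the minimax swap itself is routine via Fan's theorem because $\{\|h\|_\infty \le C\}$ is weak-$*$ compact, and the penalty computation above is precisely what produces the slack $K/N$ in the definition of $\mathbb M_N^{(K)}$ rather than the sharper condition $\mathbb Q_T = \mu^{(N)}$.
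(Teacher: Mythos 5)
Your overall structure — reduce to a purely dynamic super-replication duality on $\D^{(N)}$, swap inf and sup by a compactness/minimax argument, then optimize the static leg $h$ to produce an $\ell^1$ penalty on the terminal marginal and use $0\le G\le K$ to restrict to $\mathbb{M}_N^{(K)}$ — is exactly the structure of the paper's proof. The minor cosmetic differences (your bound $C=N\vee K$ versus the paper's $N$ on $\cZ$, and your Banach--Alaoglu appeal versus the paper's Tychonoff compactness in the product topology on $\mathbb{R}^{A^{(N)}}$) are equivalent and immaterial: on a bounded subset of $\ell^\infty$ over a countable index set, the weak-$*$ and pointwise topologies coincide, and the affine payoff is continuous in either. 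The penalty computation and the case split over $\mathbb{M}_N^{(K)}$ versus its complement match the paper step for step.

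The genuine gap is precisely the one you flag and then leave open: the claim
$\pi^{(N)}(X)=\sup_{\mathbb{Q}\in\mathbb{M}_N}\mathbb{E}_{\mathbb Q}[X]$.
You are right that $\D^{(N)}$ is an infinite set and so this is not a finite-state linear program, but the exhaustion scheme you sketch is unnecessary and not what the paper does. The whole point of introducing the reference measure $\hat{\P}^{(N)}$ with full support on the countable set $\D^{(N)}$ (Section \ref{ss.countable}) is that the $\hat{\P}^{(N)}$-a.s.\ super-replication constraint becomes a pointwise constraint on $\D^{(N)}$, while simultaneously placing the problem inside the standard semimartingale framework: under $\hat{\P}^{(N)}$ the canonical process is a pure jump process with a.s.\ finitely many jumps, hence trivially a semimartingale. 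The paper then invokes Delbaen--Schachermayer \cite{ds}, Theorem 5.7, to conclude that for any $x$ strictly above $\sup_{\mathbb Q\in\mathbb M_N}\mathbb{E}_{\mathbb Q}[G(\hat{\mathbb S})-h_\epsilon(\hat{\mathbb S}_T)]$ there is an admissible $\hat\gamma$ with $x+\int\hat\gamma\,d\hat{\mathbb S}\ge G(\hat{\mathbb S})-h_\epsilon(\hat{\mathbb S}_T)$ $\hat{\P}^{(N)}$-a.s. (Here $\mathbb M_N$, the martingale measures supported on $\D^{(N)}$, is exactly the set of martingale measures absolutely continuous with respect to $\hat{\P}^{(N)}$; the supremum over these equals the supremum over equivalent ones, which is what the Delbaen--Schachermayer theorem produces.) This single citation replaces your proposed finite truncation plus limiting argument, and it is the step your proposal would need in order to be complete. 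Everything downstream — the minimax swap (the paper uses Strasser's Theorem 45.8, you propose Fan or Kellerer) and the sign-function choice of $h$ yielding the $K/N$ slack in Definition \ref{d.mart} — you reconstructed correctly.
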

\begin{proof}
Fix
$N$ and define the set
$$
\cZ= \cZ^{(N)}:=\{h: A^{(N)}\to \mathbb{R}: |h(x)|\leq N, \ \ \forall{x}\}.
$$
Set
$$
\mathbb V:=\inf_{h\in \cZ} \sup_{\mathbb
Q\in \mathbb{M}_N} \left(\mathbb{E}_{\mathbb Q} (G(\hat{\mathbb S})- 
h(\hat{\mathbb S}_T))+\int h d\mu^{(N)}\right).
$$
Clearly, for any $\epsilon>0$, there
exists $h_\epsilon\in \cZ$ such that
$$
\sup_{\mathbb Q\in \mathbb{M}_N} \mathbb{E}_{\mathbb Q}
\left(G(\hat{\mathbb S})- h_\epsilon(\hat{\mathbb S}_T)\right)
+\int h_\epsilon
d\mu^{(N)} \le \mathbb{V}+\epsilon.
$$
By construction, the support of the measure $\hat \P^{(N)}$ is  $\D^{(N)}$.
Also all elements of $\D^{(N)}$
are piece-wise constant. Therefore, under $\hat \P^{(N)}$
the canonical process $\hat{\mathbb S}$ is trivially a semi-martingale
and we may use
the results of the seminal paper \cite{ds}.
In particular, by Theorem 5.7   in \cite{ds}, for
$$
x\le
\sup_{\mathbb Q\in \mathbb{M}_N} \mathbb{E}_{\mathbb Q}
\left(G(\hat {\mathbb S})- h_\epsilon(\hat{\mathbb S}_T)\right)
+ \epsilon,
$$
there exists an \emph{admissible} portfolio strategy $\hat\gamma$ such that
$$
x+\int_{0}^T
\hat\gamma_u d\hat{\mathbb S}_u\geq G(\hat{\mathbb S})-
h_\epsilon(\hat{\mathbb S}_T), \ \ \hat{\P}^{(N)} \ \ \mbox{a.s.}
$$
Therefore, $(h_\epsilon+x,\hat\gamma)$
satisfies (\ref{3.0})--(\ref{3.1}), consequently
$$
\hat V_N(G) \le x+ \int h_\epsilon d\mu^{(N)} \le
\sup_{\mathbb Q\in \mathbb{M}_N} \mathbb{E}_{\mathbb Q}
\left(G(\hat {\mathbb S})- h_\epsilon(\hat{\mathbb S}_T)\right)
+ \int h_\epsilon d\mu^{(N)} + \epsilon
\le  \mathbb{V}+2\epsilon.
$$
We now let
$\epsilon$ to zero to conclude that
\begin{equation}
\label{3.19}
\hat{ V}_N(G)\leq\ \inf_{h\in Z }\sup_{\mathbb Q\in \mathbb{M}_N} 
\left(\mathbb{E}_{\mathbb Q}(G(\hat{\mathbb S}) -h(\hat{\mathbb S}_T))+\int h
d\mu^{(N)}\right).
\end{equation}

The next step is to interchange
 the order of the above infimum and supremum.
Consider the vector space $\mathbb{R}^{A^{(N)}}$
of all functions $f: A^{(N)} \to \mathbb{R}$ equipped
with the topology of point-wise convergence.
Clearly,
this space is locally convex. Also, since  $A^{(N)}$
is countable, $\cZ$ is a compact subset of $\mathbb{R}^{A^{(N)}}$. The set
$\mathbb{M}_N$ can be
naturally considered as a convex subspace of the vector space
$\mathbb{R^{\D^{(N)}}}$.

Now, define the function
$\mathcal G:\cZ\times \mathbb{M}_N\rightarrow\mathbb{R}$, by
$$
\mathcal G(h,\mathbb Q)=\mathbb{E}_{\mathbb Q}
\left(G(\hat{\mathbb S})-h(\hat {\mathbb S}_T)\right)+\int h d\mu^{(N)}.
$$
Notice that $\mathcal G$ is affine in each of the variables.
From the bounded
convergence theorem, it follows that
$\mathcal G$ is continuous in the first variable.
Next, we apply the min-max theorem,  Theorem 45.8 in \cite{STR} to
$\mathcal G$.  The result is,
$$
\inf_{h\in \cZ }\sup_{\mathbb Q\in \mathbb{M}_N}\mathcal G(h,\mathbb Q)
=\sup_{\mathbb Q\in \mathbb{M}_N}
\inf_ {h\in \cZ}\mathcal G(h,\mathbb Q).
$$
This together with (\ref{3.19}) yields,
\begin{equation}
\label{3.20}
\hat{V}_N(G) \leq \sup_{\mathbb Q\in
\mathbb{M}_N}\inf_{h\in \cZ }
\left(\mathbb{E}_{\mathbb Q}(G(\hat{\mathbb S}) -h(\hat{\mathbb S}_T))
+\int h d\mu^{(N)}\right).
\end{equation}
Finally, for
any measure $\mathbb{Q}\in\mathbb{M}_N$,
define $h^{\mathbb Q}\in \cZ$ by
$$
h^{\mathbb Q}\left(k/N\right)
= N sign \left(\mathbb{Q}\left({{\hat{\mathbb S}}}_T
=k/N\right) -\mu^{(N)}\left(\left\{k/N\right\}\right)\right),
\ \ k=0,1,\ldots.
$$
In view of (\ref{3.20}),
\begin{eqnarray*}
\hat{V}_N(G) &\leq
&\sup_{\mathbb Q\in \mathbb{M}_N}
\left(\mathbb{E}_{\mathbb Q} (G(\hat{\mathbb S}))
+ \int h^{\mathbb Q}_n d\mu^{(N)}
-\mathbb{E}_{\mathbb Q} h^{\mathbb Q}(\hat {\mathbb S}_T)\right)
\\ &=&
\sup_{\mathbb Q\in \mathbb{M}_N}
\left\{\mathbb{E}_{\mathbb Q} (G(\hat{\mathbb B}))
- N \sum_{k=0}^\infty
\left|\mathbb{Q}\left({{\hat{\mathbb S}}}_T=k/N\right)
-\mu^{(N)}\left(\left\{k/N \right\}\right)\right|\right\}
\end{eqnarray*}
Suppose that $\Q \not \in \mathbb{M}^{(K)}_{N}$.  Then,
$$
N \sum_{k=0}^\infty \left|\mathbb{Q}\left({{\hat{\mathbb S}}}_T=k/N\right)
-\mu^{(N)}\left(\left\{k/N \right\}\right)\right| \ge  K.
$$
Since $G$ is bounded by $K$, this implies that
$$
\mathbb{E}_{\mathbb Q} (G(\hat{\mathbb B}))-
N \sum_{k=0}^\infty \left|\mathbb{Q}\left({{\hat{\mathbb S}}}_T
=k/N\right) -\mu^{(N)}\left(\left\{k/N \right\}\right)\right| \le 0.
$$
Since $G \ge 0$,  $\hat V_N(G) \ge 0$ as well.  Hence,
 we may assume that
$\Q \in \mathbb{M}^{(K)}_{N}$. Then,
\begin{eqnarray*}
\hat{V}_N(G)
&\leq & \sup_{\mathbb Q\in \mathbb{M}^{(K)}_N}
\left\{\mathbb{E}_{\mathbb Q}
(G(\hat{\mathbb B}))- N \sum_{k=0}^\infty
\left|\mathbb{Q}\left({{\hat{\mathbb S}}}_T=k/N\right)
-\mu^{(N)}\left(\left\{k/N \right\}\right)\right|\right\}
\\ &\le&
\sup_{\mathbb Q\in \mathbb{M}^{(K)}_{N}}
\mathbb{E}_{\mathbb Q}(G(\hat{\mathbb S})).
\end{eqnarray*}
\end{proof}
\vspace{10pt}

\section{Approximation of Martingale Measures}

In this final section, we prove the asymptotic connection between the approximating martingale measures $\mathbb{M}^{(K)}_{N}$ defined in Definition
\ref{d.mart} and the continuous martingale measures $\mathbb{M}_\mu$
 satisfying the marginal constraint at the final time,
defined in Definition \ref{d.measure}.

The following proposition completes the proof of the inequality \reff{2.10} and consequently the proofs of the main theorems when the claim $G \ge 0$ is
bounded from above. The general case then follows from Lemma \ref{l.bounded}.

\begin{prop}\label{lem4.2} Suppose that $G \ge 0$ is bounded from above by $K$ and satisfies the Assumption \ref{asm2.1}. Assume that $\mu$ satisfies
\reff{2.2}-\reff{2.3}. Then $$ \limsup_{N\rightarrow\infty}\ \sup_{\mathbb Q\in\mathbb{M}^{(K)}_N}\ \mathbb{E}_{\mathbb Q} \left[G (\hat{\mathbb S})
\right] \ \leq \ \sup_{\Q\in\mathbb{M}_\mu} \ \mathbb{E}_{\Q}\left[ G(\mathbb S)
 \right].
$$ \end{prop}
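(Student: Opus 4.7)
\medskip

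\noindent\textbf{Proof plan for Proposition \ref{lem4.2}.}

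The plan is a tightness/weak-limit argument. Pick $\ve_N \downarrow 0$ and for each $N$ choose $\Q_N \in \M^{(K)}_N$ such that
$$
\E_{\Q_N}\!\left[G(\hat{\mathbb S})\right] \ge \sup_{\Q \in \M^{(K)}_N} \E_{\Q}\!\left[G(\hat{\mathbb S})\right] - \ve_N .
$$
I will show that along a subsequence the laws of $\hat{\mathbb S}$ under $\Q_N$ converge weakly on $\hat\Omega=\D[0,T]$ to a measure $\Q_\infty$ supported on $\cC^+[0,T]$ which belongs to $\M_\mu$, and that $\E_{\Q_N}[G(\hat{\mathbb S})] \to \E_{\Q_\infty}[G(\mathbb S)]$. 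This yields the claim.

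\smallskip

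\noindent\emph{Step 1: Uniform moment bound.} Under any $\Q_N \in \M^{(K)}_N$, the canonical process $\hat{\mathbb S}$ is a martingale whose terminal marginal is within total variation $K/N$ of $\mu^{(N)}$. Since $\mu^{(N)}\to\mu$ weakly and $\mu$ has a finite $p$-th moment by \reff{2.3}, the laws of $\hat{\mathbb S}_T$ are uniformly $p$-integrable, and Doob's $L^p$-inequality gives
$$
\sup_N \E_{\Q_N}\!\left[\sup_{0\le t\le T}|\hat{\mathbb S}_t|^p\right] < \infty .
$$
In particular the family $\{\sup_t \hat{\mathbb S}_t\}_N$ is uniformly integrable.

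\smallskip

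\noindent\emph{Step 2: Tightness and a continuous limit.} Since $\hat{\mathbb S}$ under $\Q_N$ is a martingale with jumps of size exactly $1/N$ and Step~1 controls its modulus, Aldous' criterion (applied to the martingales stopped to bound second moments, using the moment bound) gives tightness of $\{\Q_N\}$ on $\D[0,T]$ equipped with the Skorokhod $J_1$ topology. Extract a subsequence (not relabelled) with $\Q_N \Rightarrow \Q_\infty$. Because the jumps of $\hat{\mathbb S}$ under $\Q_N$ are uniformly bounded by $1/N \to 0$, the limit law is concentrated on $\cC^+[0,T]$; on this subspace $J_1$ convergence coincides with uniform convergence.

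\smallskip

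\noindent\emph{Step 3: Limit is a martingale with marginal $\mu$.} For any $0\le s\le t\le T$ and any bounded continuous $\cF_s$-measurable $\Phi$, the martingale property $\E_{\Q_N}[(\hat{\mathbb S}_t-\hat{\mathbb S}_s)\Phi]=0$ (if $s,t$ are continuity points, otherwise by a standard approximation) passes to the limit using weak convergence combined with the uniform $p$-integrability from Step~1. Hence the canonical process is a martingale under $\Q_\infty$. For the terminal law: $\hat{\mathbb S}_T$ under $\Q_N$ differs from $\mu^{(N)}$ by at most $K/N$ in total variation, and $\mu^{(N)} \Rightarrow \mu$ (as noted after \reff{3.17}), so the law of $\mathbb S_T$ under $\Q_\infty$ equals $\mu$. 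Thus $\Q_\infty \in \M_\mu$.

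\smallskip

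\noindent\emph{Step 4: Passage to the limit for $G$.} By Skorokhod's representation theorem, pick a common probability space carrying $\tilde{\mathbb S}^N \sim \Q_N$ and $\tilde{\mathbb S}\sim\Q_\infty$ with $\tilde{\mathbb S}^N \to \tilde{\mathbb S}$ in $J_1$ almost surely. Since $\tilde{\mathbb S}$ is continuous, this convergence is in sup-norm. Assumption \ref{asm2.1} then yields
$$
\left|G(\tilde{\mathbb S}^N) - G(\tilde{\mathbb S})\right| \le L\,\|\tilde{\mathbb S}^N - \tilde{\mathbb S}\| \longrightarrow 0 \quad \text{a.s.}
$$
Assumption \ref{asm2.1} also gives $|G(\tilde{\mathbb S}^N)| \le |G(0)| + L\,\|\tilde{\mathbb S}^N\|$; Step~1 makes the right side uniformly integrable, so dominated convergence gives
$$
\E_{\Q_N}\!\left[G(\hat{\mathbb S})\right] \longrightarrow \E_{\Q_\infty}\!\left[G(\mathbb S)\right] \le \sup_{\Q\in \M_\mu}\E_{\Q}[G(\mathbb S)],
$$
which is the desired inequality (the boundedness of $G$ by $K$ makes this even simpler but the argument above needs only Assumption \ref{asm2.1} and Step 1).

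\smallskip

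\noindent\emph{Main obstacle.} The delicate point is Step 2/3: verifying tightness and then checking that the martingale property survives in the limit despite the marginal at $T$ being only approximately $\mu^{(N)}$. The uniform $p$-moment of Step~1 is what makes both tightness via Aldous and the closability of the martingale property (through uniform integrability when testing $(\hat{\mathbb S}_t-\hat{\mathbb S}_s)\Phi$) go through; without \reff{2.3} this step would fail.
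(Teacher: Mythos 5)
Your approach is genuinely different from the paper's, and the paper explicitly flags the road you take: ``We prove this result not through a compactness argument as one may expect. Instead, we show that any given measure $\Q \in \mathbb{M}^{(K)}_{N}$ has a lifted version in $\mathbb{M}_\mu$ that is close to $\Q$ in some sense.'' The paper constructs, for each fixed $\hat\Q\in\M^{(K)}_N$, a Brownian martingale that matches the jump-time/jump-value distribution of $\hat\Q$ on its first $m$ jumps, then patches the terminal law to be exactly $\mu$ via a Skorokhod coupling and controls the error in Prokhorov distance. You instead try to extract a weak subsequential limit of near-optimal $\Q_N$'s. That is a natural idea, but it hits an obstruction the paper's construction is specifically designed to avoid.

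The gap is in Step~1. The constraint \reff{3.18+} is an $\ell^1$ (total variation) bound $\sum_k|\Q_N(\hat{\mathbb S}_T=k/N)-\mu^{(N)}(\{k/N\})|<K/N$, and total variation closeness does \emph{not} transfer moment bounds or uniform $p$-integrability: a mass of order $K/N$ placed at a point of order $N^{3/(p-1)}$ is invisible to the TV distance and to the first-moment (martingale) constraint $\E_{\Q_N}[\hat{\mathbb S}_T]\le 1+1/N$, yet it makes $\E_{\Q_N}[\hat{\mathbb S}_T^p]$ diverge. Since the jump size is $1/N$ but the number of jumps is unbounded, such a terminal law is realizable by a local martingale in $\M^{(K)}_N$, so $\sup_N\E_{\Q_N}\left[\sup_t\hat{\mathbb S}_t^p\right]<\infty$ does not follow from \reff{2.3} and \reff{3.18+}. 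Without it, Step~2 (tightness via Aldous needs quantitative control on increments beyond the bounded jump size) and Step~3 (closing the martingale property in the limit requires uniform integrability of $\hat{\mathbb S}_t$) both break. The paper avoids this precisely because it never needs moment bounds on $\hat\Q$ itself: the $p$-th moment enters only through $\Lambda\sim\mu$ in \reff{4.16+++}, a random variable whose law is $\mu$ by construction, and the comparison between $\hat\Q$ and the constructed continuous measure is controlled in Prokhorov metric (adequate because $G$ is bounded by $K$), not through uniform integrability. If you want to salvage a compactness argument you would need an a priori moment estimate on $\sup_{\Q\in\M^{(K)}_N}\E_\Q[\hat{\mathbb S}_T^p]$ that the definition of $\M^{(K)}_N$ simply does not provide.
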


We prove this result not through a compactness argument as one may expect. Instead, we show that any given measure $\Q \in \mathbb{M}^{(K)}_{N}$ has a
lifted version in $\mathbb{M}_\mu$ that is close to $\Q$ in some sense. Indeed, the above proposition is a direct consequence of the below lemma.

Recall the Lipschitz constant $L$ in Assumption \ref{asm2.1}.

\begin{lem}
\label{l.lift}
Under the hypothesis of Proposition \ref{lem4.2}, there exists  a
 function $f_{K}(\epsilon,N)$
satisfying,
 $$
 \lim_{\epsilon\downarrow 0}\lim_{N\rightarrow\infty}f_{K}(\epsilon,N)=0
 $$
 so that
 for any $\hat{\mathbb Q} \in \mathbb{M}^{(K)}_{N}$
and $\epsilon>0$,
$$
\mathbb{E}_{\hat{\mathbb Q}} \left[ G(\hat{\mathbb S})\right] \le f_{K} ( \epsilon,N) +\sup_{\Q\in \mathbb{M}_{\mu}}
\mathbb{E}_{\Q}\left[ G(\mathbb S)\right].
$$
\end{lem}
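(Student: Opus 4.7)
The plan is to construct, for each $\hat{\mathbb{Q}} \in \mathbb{M}^{(K)}_N$, an explicit $\mathbb{Q}^*\in\mathbb{M}_\mu$ together with a coupling to $\hat{\mathbb{Q}}$ making the canonical paths close in supremum norm; the Lipschitz part of Assumption \ref{asm2.1} then converts pathwise closeness into closeness in expectation of $G$. I would work on an auxiliary probability space carrying $\hat{\mathbb{S}}$ under $\hat{\mathbb{Q}}$ together with an independent standard Brownian motion $W$, and split $[0,T]$ into the bulk $[0,T-\epsilon]$ and the terminal strip $[T-\epsilon,T]$.

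On the bulk, I would continuify $\hat{\mathbb{S}}$ by replacing each of its $\pm 1/N$ jumps at time $\hat\tau_k$ with a short Brownian excursion inserted in a tiny window ending at $\hat\tau_k$: run a time-changed copy of $W$ started at $\hat{\mathbb{S}}_{\hat\tau_k^-}$ and stopped when it first exits $(\hat{\mathbb{S}}_{\hat\tau_k^-}-1/N,\hat{\mathbb{S}}_{\hat\tau_k^-}+1/N)$, conditioned on the exit side matching the realized value $\hat{\mathbb{S}}_{\hat\tau_k}$. Because the $\hat{\mathbb{Q}}$-martingale property forces both exit directions to have conditional probability $1/2$, this conditioning is consistent with unconditioned Brownian motion, and the resulting interpolation $\mathbb{S}^\epsilon$ is a continuous martingale in the enlarged filtration with $\|\mathbb{S}^\epsilon-\hat{\mathbb{S}}\|_\infty\le 2/N$. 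Combined with Assumption \ref{asm2.1}, Doob's maximal inequality, and the moment bound \reff{2.3}, this step contributes $O_K(1/N)$ to the error in $\mathbb{E}[G]$.

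On the terminal strip $[T-\epsilon,T]$, I would extend $\mathbb{S}^\epsilon$ to a continuous martingale whose marginal at $T$ is exactly $\mu$, using $W$ for a Skorokhod-type embedding: for each starting point $y=\mathbb{S}^\epsilon_{T-\epsilon}$, choose a conditional kernel $y\mapsto\mu_y$ of mean $y$ with $\int \mu_y\,d\lambda(y)=\mu$, where $\lambda$ is the law of $\mathbb{S}^\epsilon_{T-\epsilon}$, and realize $\mu_y$ via an appropriately time-changed Brownian stopping rule. The extra pathwise deviation introduced on $[T-\epsilon,T]$ will be $O(\sqrt{\epsilon\,\mathrm{Var}(\mu)})$ by Doob-maximal, contributing an $O(\sqrt\epsilon)$ error in $\mathbb{E}[G]$.

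The hard part will be setting up this terminal embedding so that $\mathbb{Q}^*\in\mathbb{M}_\mu$. By Strassen's theorem, a Markov kernel $y\mapsto\mu_y$ as above exists iff $\lambda\preceq_{\mathrm{cx}}\mu$, but the natural $\lambda$ is close in total variation to $\mu^{(N)}$, and the tent-function construction gives $\mu\preceq_{\mathrm{cx}}\mu^{(N)}$ (apply Jensen to the piecewise-linear interpolant), which is the wrong direction. To absorb this mismatch I would exploit the total-variation control $\sum_k |\hat{\mathbb{Q}}(\hat{\mathbb{S}}_T=k/N)-\mu^{(N)}(\{k/N\})|\le K/N$ built into the definition of $\mathbb{M}^{(K)}_N$: on an exceptional event of $\hat{\mathbb{Q}}$-probability at most $K/N$ the terminal strip is redirected to a fixed Skorokhod embedding of $\mu$, contributing an additional error $O(K\|G\|_\infty/N)=O(K^2/N)$, while on the complementary event a small local modification of the bulk continuification near the final jumps pushes $\lambda$ into convex order below $\mu$. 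Aggregating the three pieces yields $f_K(\epsilon,N)=O_K(1/N)+O(\sqrt\epsilon)+O(K^2/N)$, which satisfies $\lim_{\epsilon\downarrow 0}\lim_{N\to\infty}f_K(\epsilon,N)=0$.
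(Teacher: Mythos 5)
Your overall plan --- couple $\hat{\mathbb Q}$ with a continuous martingale law in $\mathbb{M}_\mu$ so that paths are close in sup norm with high probability, then use the Lipschitz part of Assumption \ref{asm2.1} to pass to expectations --- matches the paper's. The bulk continuification (replacing each $\pm 1/N$ jump by a symmetric Brownian exit, which is consistent because the martingale property forces conditional probabilities $1/2$) is essentially the paper's construction of the Wiener-space martingale $Z$ and stopping times $\sigma_k$, though you describe inserting the excursion "in a tiny window ending at $\hat\tau_k$", which is anticipative since $\hat\tau_k$ is a stopping time. The paper avoids this by building the $\sigma_k,Y_k$ recursively on an independent Wiener space so that $(\vec\sigma_m, N^{-1}\vec Y_m)$ matches the joint law of $(\vec\tau_m, \vec{\Delta\hat{\mathbb S}}_m)$ under $\mathbb Q_1$, and then defining $Z_t := 1 + N^{-1}\,E^W[\sum_{i\le m} Y_i\,|\,\mathcal F^W_t]$, a bona fide continuous martingale. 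That part of your proposal is fixable.

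The substantive gap is in the terminal step. A dilation kernel $y\mapsto\mu_y$ with barycenter $y$ and $\int\mu_y\,d\lambda=\mu$ exists (Strassen) iff $\lambda\preceq_{\mathrm{cx}}\mu$, and you correctly observe the order runs the wrong way since $\lambda$ is close to $\mu^{(N)}$ and $\mu\preceq_{\mathrm{cx}}\mu^{(N)}$. But the proposed fix is not viable: convex order is a global constraint that cannot be restored by a ``small local modification'' controlled only in total variation, and redirecting the terminal strip on an $O(K/N)$ exceptional set without compensating the conditional means destroys the martingale property and, unless the redirected mass is balanced against the rest, also moves the terminal marginal off $\mu$. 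The paper sidesteps convex order entirely: it does \emph{not} try to extend the continuified martingale past some interior time. Instead, using the Prokhorov bound $d(\mathrm{Law}(Z_T),\mu)<2\epsilon$ together with a Skorokhod-type representation result (Shiryaev, Thm.~4 p.~358, and Skorokhod \cite{Sk}), it constructs $\Lambda=\psi(Z_T,W^{(m+2)}_T)$ with $\mathrm{Law}(\Lambda)=\mu$ and $P^W(|\Lambda-Z_T|>2\epsilon)<2\epsilon$, then sets $\Gamma_t := E^W[\Lambda\,|\,\mathcal F^W_t]$, whose law is automatically in $\mathbb{M}_\mu$. Since $\Gamma-Z$ is a martingale closed by $\Lambda-Z_T$, Doob's maximal inequality plus H\"older and \reff{2.3} bound $P^W(\|\Gamma-Z\|>\epsilon^{1/2q})$, and that yields $f_K(\epsilon,N)$ without any convex-order or gluing argument. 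You should replace your terminal embedding with this ``match the terminal law, then re-expand backward by conditional expectation'' device.
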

\begin{proof}

Fix  $\epsilon\in (0,1)$, a positive integer $N$ and
$\hat{\mathbb Q}\in\mathbb{M}^{(K)}_N$.
Recall that $G$ is bounded from above by $K$. \vspace{10pt}

{\em Shift of the initial value.} Denote by $\mathbb{D}^{(N)}_1$ the
 set of all functions $f \in
\mathbb{D}[0,T]$ which satisfy $f(0)=1$ and the conditions 2--4 in definition \ref{d.DN}. Define a map $H:\mathbb{D}^{(N)}\rightarrow \mathbb{D}^{(N)}_1$
by $H(f)=f+1-f(0)$. Consider the measure $\mathbb Q_1= H\circ \hat{\mathbb Q}$, 
clearly $\mathbb Q_1$ is a martingale measure. \vspace{10pt}

{\em Jump times.} Since the probability measure $\hat{\mathbb Q}$ is supported on the set $\mathbb{D}^{(N)}$, the canonical process $\hat{\mathbb S}$ is a
purely jump process under $\mathbb Q_1$, with a finite number of jumps. Introduce the jump times by setting $\tau_0=0$ and for $k>0$, $$
\tau_k=\inf\{t>\tau_{k-1}: \hat{\mathbb S}_t \neq \hat{\mathbb S}_{t{\mbox{\tiny{-}}}}\}\wedge T. $$ Next we introduce the largest random time $$ \hat
N:=\min\{k: \tau_k=T\}. $$ Then,  $\hat N<\infty$ almost surely and consequently, there exists a deterministic positive integer $m$ (depending on
$\epsilon$) such that \begin{equation} \label{4.1} \mathbb{Q}_1(\hat N>m)<\epsilon. \end{equation} By the definition of the set $\mathbb{D}^{(N)}$,
 there is a decreasing sequence of strictly
positive numbers $t_k\downarrow 0$, with $t_1=T$, such that for $i=1,...,m$, $$ \tau_i-\tau_{i-1} \in {\{t_k\}}_{k=1}^\infty \ \cup \ {\{0\}}, \ \
\mathbb{Q}_1-a.s. $$ \vspace{10pt}

{\em Wiener space.} Let $(\Omega^W,\mathcal{F}^W,P^W)$ be a complete probability space together with a standard $m+2$--dimensional Brownian motion
$\left\{W_t=\left(W^{(1)}_t,W^{(2))}_t,...,W^{(m+2)}_t\right)\right\}_{t=0}^\infty$, and the natural filtration $\mathcal{F}^W_t=\sigma{\{W_s|s\leq{t}\}}$.
The next step is to construct a martingale $Z$ on the Brownian probability space $(\Omega^W,\mathcal{F}^W,P^W)$ together with a sequence of stopping times
(with respect to the Brownian filtration) $\sigma_1\leq \sigma_2\leq...\leq\sigma_m$ such that the distribution (under the Wiener measure $P^W$) of the
random vector $(\sigma_1,...,\sigma_m,Z_{\sigma_1},...,Z_{\sigma_m})$ is equal to the distribution of the random vector $(\tau_1,...,\tau_m,\hat{\mathbb
S}_{\tau_1},...,\hat{\mathbb S}_{\tau_m})$ under the measure $\mathbb{Q}_1$. Namely, \begin{equation}\label{4.1+}
\left((\sigma_1,...,\sigma_m,Z_{\sigma_1},...,Z_{\sigma_m}),P^W\right)= \left((\tau_1,...,\tau_m,\hat{\mathbb{S}}_{\tau_1},...,
\hat{\mathbb{S}}_{\tau_m}),\mathbb Q_1\right). \end{equation} The construction is done by induction, at each step $k$ we construct the stopping time
$\sigma_k$ and $Z_{\sigma_k}$ such that the conditional probability is the same as in the case of the canonical process $\hat{\mathbb S}$ under the measure
$\mathbb Q_1$. \vspace{10pt}

{\em Construction of $\sigma$'s and $Z$.} For an integer $n$ and given $x_1,\ldots,x_n$, introduce the notation $$ \vec{x}_n:=(x_1,\ldots,x_n). $$ Also set
$$ \mathbb{T}:= {\{t_k\}}_{k=1}^\infty. $$ For $k=1,...,m$,  define the functions $\Psi_k,\Phi_k:\mathbb{T}^k\times\{-1,1\}^{k-1}\rightarrow [0,1]$ by
\begin{equation} \label{4.2} \Psi_k(\vec{\alpha}_k;\vec{\beta}_{k-1}):=\mathbb{Q}_1 \left(\tau_k-\tau_{k-1}\geq\alpha_k\  \big|\ A \right), \end{equation}
where $$ A:= \left\{ \tau_i-\tau_{i-1}=\alpha_i,\ \hat{\mathbb S}_{\tau_i}-\hat{\mathbb S}_{\tau_{i-1}} =\beta_i/ N, \ \ i\leq k-1\right\}, $$ and
\begin{equation} \label{4.3} \Phi_k(\vec{\alpha}_k;\vec{\beta}_{k-1})= \mathbb{Q}_1 \left(\hat{\mathbb S}_{\tau_k}-\hat{\mathbb S}_{\tau_{k-1}}=1/N \ \big|
\ B\ \right), \end{equation} where $$ B= \left\{ \tau_k<T, \tau_j-\tau_{j-1}=\alpha_j,\ \hat{\mathbb S}_{\tau_i}-\hat{\mathbb S}_{\tau_{i-1}} =\beta_i/N, \
j\leq k, i\leq k-1\right\}. $$ As usual we set $\mathbb{Q}_1(\cdot|\emptyset)\equiv 0$. Next, for $k\leq m$, we define the maps
$\Gamma_k,\Theta_k:\mathbb{T}^k \times\{-1,1\}^{k-1}\rightarrow [-\infty,\infty]$, as the unique solutions of the following equations,
\begin{equation}\label{4.4} P^W\left(W^{(1)}_{\alpha_k}<\Gamma_k( \vec{\alpha}_k;\vec{\beta}_{k-1}) \right)= \Phi_k(\vec{\alpha}_k;\vec{\beta}_{k-1}),
\end{equation} and \begin{equation} \label{4.5} P^W\left(W^{(1)}_{t_l}-W^{(1)}_{t_{l+1}}<\Theta_k( \vec{\alpha}_k;\vec{\beta}_{k-1})\right)=
\frac{\Psi_k(\vec{\alpha}_{k-1},t_l;\vec{\beta}_{k-1})} {\Psi_k(\vec{\alpha}_{k-1},t_{l+1};\vec{\beta}_{k-1})}, \end{equation} where $l\in\mathbb{N}$ is
given by $\alpha_k=t_l\in \mathbb{T}$. From the definitions it follows that $\Psi_k(\vec{\alpha}_{k-1},t_l;\vec{\beta}_{k-1})\leq
\Psi_k(\vec{\alpha}_{k-1},t_{l+1};\vec{\beta}_{k-1})$. Thus if $\Psi_k(\vec{\alpha}_{k-1},t_{l+1};\vec{\beta}_{k-1})=0$ for some $l$, then also
$\Psi_k(\vec{\alpha}_{k-1},t_{l};\vec{\beta}_{k-1})=0$. We set $0/0\equiv 0$.

Set $\sigma_0\equiv0$ and define the random variables $\sigma_1,...,\sigma_m,Y_1,...,Y_m$ by the following recursive relations \begin{eqnarray} \label{4.6}
\sigma_1&=&\sum_{k=1}^\infty t_k \chi_{\{W^{(1)}_{t_k}-W^{(1)}_{t_{k+1}}>\Theta_1(t_k)\}} \ \prod_{j=k+1}^\infty
 \chi_{\{W^{(1)}_{t_j}-W^{(1)}_{t_{j+1}}<\Theta_1(t_j)\}},\\
Y_1&=&2\chi_{\{W^{(2)}_{\{\sigma_1}>\Gamma_1(\sigma_1)\}}-1,\nonumber \end{eqnarray} and for $i>1$ \begin{eqnarray} \label{4.7}
\sigma_i&=&\sigma_{i-1}+\Delta_i\nonumber\\ Y_i&=&\chi_{\{\sigma_i<T\}}\left(2\chi_{\{W^{(i+1)}_{\sigma_i}-W^{(i+1)}_{\sigma_{i-1}}>
\Gamma_i(\vec{\Delta\sigma}_{i},\vec{Y}_{i-1})\}}-1\right),\nonumber \end{eqnarray} where $\Delta_i = t_k$ on the set $A_i \cap B_{i,k}\cap C_{i,k}$ and
zero otherwise.  These sets are given by, \begin{eqnarray*} A_i&:=& {\{|Y_{i-1}|>0\}},\\
B_{i,k}&:=&\{W^{(1)}_{t_k+\sigma_{i-1}}-W^{(1)}_{t_{k+1}+\sigma_{i-1}}> \Theta_i(\vec{\sigma}_{i-1},t_k;\vec{Y}_{i-1})\},\\
C_{i,k}&:=&\bigcap_{j=k+1}^\infty \{W^{(1)}_{t_j+\sigma_{i-1}}-W^{(1)}_{t_{j+1}+\sigma_{i-1}}< \Theta_i(\vec{\Delta\sigma}_{i-1},t_j;\vec{Y}_{i-1})\}.
\end{eqnarray*}

Since $t_k$ is decreasing with $t_1=T$,
 $\sigma_1\leq \sigma_2\leq...\leq\sigma_m$ and
 they are stopping times with respect to the
Brownian filtration. Let $k\leq m$ and $(\vec{\alpha}_k;\vec{\beta}_{k-1})\in \mathbb{T}^k\times\{-1,1\}^{k-1}$. There exists $m\in\mathbb{N}$ such that
$\alpha_k=t_m\in\mathbb{T}$. From (\ref{4.6})--(\ref{4.7}), the strong Markov property and the independency of the Brownian motion increments it follows
that \begin{eqnarray}\label{4.8} &&P^W(\sigma_k-\sigma_{k-1}\geq\alpha_k \big| (\vec{\Delta
\sigma}_{k-1};\vec{Y}_{k-1})=(\vec{\alpha}_{k-1};\vec{\beta}_{k-1}))
 \\
&&\hspace{40pt}= P^W\bigg(\bigcap_{j=m}^\infty \bigg(W^{(1)}_{t_j+\sigma_{k-1}}-W^{(1)}_{t_{j+1}+\sigma_{k-1}}
<\Theta_k(\vec{\alpha}_{k-1},t_j;\vec{\beta}_{k-1})\bigg)\bigg) \nonumber\\ &&\hspace{40pt}=\prod_{j=m}^\infty
P^W\bigg(W^{(1)}_{t_j+\sigma_{k-1}}-W^{(1)}_{t_{j+1}+\sigma_{k-1}}< \Theta_k(\vec{\alpha}_{k-1},t_j;\vec{Y}_{k-1})\bigg)\nonumber\\
&&\hspace{40pt}=\Psi_k(\vec{\alpha}_k,\vec{\beta}_{k-1}), \nonumber \end{eqnarray} where the last equality follows from (\ref{4.5}) and the fact that $$
\lim_{l\rightarrow\infty} \Psi_k (\alpha_1,...,\alpha_{k-1},t_l,\beta_1,...,\beta_{k-1})=1. $$ Similarly, from (\ref{4.4}) and (\ref{4.7}), we have
\begin{eqnarray} \label{4.9} &&P^W\left(Y_k=1\big|\sigma_k<T,\vec{\Delta \sigma}_k=\vec{\alpha}_k,\vec{Y}_{k-1}=\vec{\beta}_{k-1}\right) \\
&&\hspace{60pt}=P^W\left(W^{(k+1)}_{\sum_{i=1}^k \alpha_i}-W^{(k+1)}_{\sum_{i=1}^{k-1} \alpha_i}<
\Gamma_k(\vec{\alpha}_k;\vec{\beta}_{k-1})\right)\nonumber\\ &&\hspace{60pt}=\Phi_k(\vec{\alpha}_k;\vec{\beta}_{k-1}).\nonumber \end{eqnarray} Using
(\ref{4.2})--(\ref{4.3}) and (\ref{4.8})--(\ref{4.9}), we conclude that $$ \left((\vec{\sigma}_m;\frac{1}{N}\vec{Y}_m), P^W\right)=\left((\vec{\tau}_m;
\vec{\Delta \hat{\mathbb S}}_m),\mathbb{Q}_1\right) $$ where ${\Delta \hat{\mathbb S}}_k=\hat{\mathbb{S}}_{\tau_k} -\hat{\mathbb{S}}_{\tau_{k-1}}$, $k\leq
m$. \vspace{10pt}

{\em Continuous martingale.} Set \begin{equation}\label{4.11} Z_t=1+\frac{1}{N}E^W(\sum_{i=1}^m Y_i|\mathcal{F}^W_t), \ \ t\in [0,T]. \end{equation} Since
all Brownian martingales are continuous, so is $Z$. Moreover, Brownian motion increments are independent and therefore, \begin{equation}\label{4.12}
Z_{\sigma_k}=1+\frac{1}{N}\sum_{i=1}^k Y_i, \ \ P^W\mbox{a.s.}, \ \ k\leq m. \end{equation} By the construction of $Y$ and $\sigma$'s, we conclude that
(\ref{4.1+}) holds with the process $Z$. \vspace{10pt}

{\em Measure in $\M_{\mu}$.} The next step in the proof is to modify the martingale $Z$ in such way that the distribution of the modified martingale is an
element of $\mathbb{M}_{\mu}$. For any two probability measures $\nu_1,\nu_2$ on $\mathbb{R}$, Prokhorov's metric is defined by
$$d(\nu_1,\nu_2)=\inf\{\delta>0: \nu_1(A)\leq \nu_2(A^\delta)+\delta \ \mbox{and} \ \nu_2(A)\leq \nu_1(A^\delta)+\delta, \ \
\forall{A}\in\mathcal{B}(\mathbb{R})\},$$ where $\mathcal{B}(\mathbb{R})$ is the set of all Borel sets $A\subset\mathbb{R}$ and $A^\delta:=\bigcup_{x\in
A}(x-\delta,x+\delta)$ is the $\delta$--neighborhood of $A$. It is well known that convergence in the Prokhorov metric is equivalent to weak convergence,
(for more details on Prokhorov's metric see \cite{S}, Chapter 3, Section 7).

Let $\nu_1$ and $\nu_2$, be the distributions of $\hat{\mathbb S}_{\tau_m}$ and $\hat{\mathbb{S}}_T$ respectively, under the measure $\mathbb{Q}_1$. Let
$\nu_3$ be the be the distributions of $\hat{\mathbb S}_T$ under the measure $\hat{\mathbb Q}$. In view of (\ref{4.1}), $d(\nu_1,\nu_2)<\epsilon$. From the
definition of the measure $\mathbb Q_1$ it follows that $d(\nu_2,\nu_3)<\frac{2}{N}$. Moreover, (\ref{3.18+}) implies
 that $d(\nu_3,\mu^{(N)})<\frac{K}{N}$
and $\mu^{(N)}$ converges to $\mu$ weakly. Hence, the preceding inequalities, together with this convergence yield that for all sufficiently large $N$,
$d(\nu_1,\mu)<2\epsilon$. Finally, we observe that in view of  (\ref{4.1+}), $(Z_T,P^W)=\nu_1$.

We now use Theorem 4 on page 358 in \cite{S} and Theorem 1 in \cite{Sk} to construct a measurable function $\psi:\mathbb{R}^2\rightarrow\mathbb{R}$ such
that  the random variable $\Lambda:=\psi(Z_T,W^{(m+2)}_T)$ satisfies \begin{equation}\label{4.16} (\Lambda,P^W)=\mu \ \ \mbox{and} \ \
P^W(|\Lambda-Z_T|>2\epsilon|)<2\epsilon. \end{equation} We define a martingale by, $$ \Gamma_t=E^W(\Lambda|\mathcal{F}^W_t), \ \ t\in [0,T]. $$ In view of
(\ref{4.16}),  the distribution of the martingale $\Gamma$ is an element in $\mathbb{M}_{\mu}$. Hence, \begin{equation}\label{4.16++} \sup_{\mathbb
Q\in\mathbb{M}_\mu} \mathbb{E}_{\Q} \left[G(\mathbb S)\right] \geq E^W(G(\Gamma)). \end{equation}

We continue with the estimate that connects the distribution of $\Gamma$ to $\Q \in \mathbb{M}^{(K)}_{N}$. Observe that $E^W\Lambda=E^W Z_T=1$. This
together with (\ref{4.16}), positivity of $Z+\frac{1}{N}$ and $\Lambda$, and the Holder inequality yields \begin{eqnarray} \label{4.16+++}
E^W|\Lambda-Z_T|&=& 2 E^W(\Lambda-Z_T)^{+}-E^W(\Lambda-Z_T)\\ \nonumber &=& 2 E^W(\Lambda-Z_T)^{+} \\ \nonumber &\leq& 4\epsilon+\frac{2}{N}+ 2E^W(\Lambda
\chi_{\{|\Lambda-Z_T|>2\epsilon\}})\\ &\leq&
 4\epsilon+\frac{2}{N}+2(\int x^p
 d\mu(x))^{1/p}(2\epsilon)^{1/q},\nonumber
\end{eqnarray} where $p>1$ is as \reff{2.3} and $q=p/({p-1})$. From (\ref{4.16+++}) and the Doob inequality for the martingale $\Gamma_t-Z_t$, $t\in [0,1]$
we obtain \begin{equation}\label{4.16++++} E^W(\chi_{\{\|\Gamma-Z\|>\epsilon^{1/2q}\}})\leq \frac{E^W|\Lambda-Z_T|}{\epsilon^{1/2q}}\leq
\frac{4\epsilon+\frac{2}{N}+2\left(\int x^p d\mu(x)\right)^{1/p}(2\epsilon)^{1/q}}{\epsilon^{1/2q}}. \end{equation}

We now introduce a stochastic process ${(\hat{Z}_t)}_{t=0}^T$, on the Brownian probability space, by, $\hat{Z}_t=Z_{\sigma_k}$ for $t\in
[\sigma_k,\sigma_{k+1})$, $k<m$ and for $t\in [\sigma_m,T]$, we set $\hat{Z}_t=Z_{\sigma_m}$. On the space $(\hat\Omega,\mathbb Q_1)$ let $\tilde{\mathbb
S}_t=\hat{\mathbb S}_{t\wedge \tau_m}$, $t\in [0,T]$. Recall that $G$ is bounded by $K$.  We now use the Assumption (\ref{asm2.1}) together with
(\ref{4.1}) and (\ref{4.12}) to arrive at \begin{eqnarray} \label{4.13} &&\mathbb{E}_{\mathbb Q_1}( G(\hat{\mathbb S}))- \mathbb{E}_{\mathbb Q_1}
(G(\tilde{\mathbb S}))\leq K \epsilon \\ &&|E^W(G(Z))-E^W (G(\hat Z))| \leq L E^W \| Z-\hat Z\|\leq \frac{L}{N}.\nonumber \end{eqnarray}

Recall that by (\ref{4.1+}), $(\hat Z,P^W)=(\tilde{\mathbb {S}},\mathbb{Q}_1)$. Thus, $E^W (G(\hat Z))= \mathbb{E}_{\mathbb Q_1}(G(\tilde{\mathbb S})).$
This together with Assumption \ref{asm2.1} and (\ref{4.13}) yields \begin{equation} \label{4.14} \mathbb{E}_{\hat{\mathbb Q}}(G(\hat{\mathbb S}))\leq
\frac{L}{N}+\mathbb{E}_{\mathbb Q_1}(G(\hat{\mathbb S})) \leq \frac{2L }{N}+K  \epsilon+ E^W(G(Z)). \end{equation}

From Assumption \ref{asm2.1}, (\ref{4.16++})--(\ref{4.16++++}) and (\ref{4.14}) we obtain \begin{eqnarray*} \sup_{\Q\in\mathbb{M}_\mu}
\mathbb{E}_{\Q}\left[ G(\mathbb S)\right]&\geq& E^W(G(\Gamma))\\ &\geq &E^W(G(Z))-L \epsilon^{1/2q}- K
E^W(\chi_{\{\|\Gamma-Z\|>\epsilon^{1/2q}\}})\\ &\geq& \mathbb{E}_{\hat{\mathbb Q}}(G(\hat{\mathbb S}))- f_{K} (\epsilon,N), \end{eqnarray*} where $$
f_{K}(\epsilon,N) =\frac{2L}{N}+ K \epsilon+ L\epsilon^{1/2q}+K \frac{4\epsilon+\frac{2}{N}+\left(\int x^p
d\mu(x)\right)^{1/p}(2\epsilon)^{1/q}}{\epsilon^{1/2q}}. $$ \end{proof} \vspace{10pt}

\section{Possible extensions.}
\label{s.extension}
In this paper, we prove a Kantorovich type duality
for a super-replication problem
in financial market
with no prior probability structure.
The dual is a martingale optimal
problem.

The main theorem
holds for nonlinear
path-dependent options
satisfying Assumption \ref{asm2.1}.
Although this condition is satisfied
by most of the examples,
it is an interesting question
to characterize the class of functions
for which the duality holds.
A possible procedure
for extending the proof is the following.
 Assumption \ref{asm2.1}
 is used in the proofs of
 Lemmas \ref{l.bounded}, \ref{l.1} and \ref{lem3.1}.
 In Lemma \ref{l.bounded},
 only the linear growth implied by the assumption
 is used and one may replace this
 assumption by an appropriate
 growth condition on the function $G$.
 In particular, if $G$ is bounded no assumption
 would be required.

Since the inequality
(\ref{2.10++}) is holds for any measurable function $G$,
we need to extend the proof
of  the inequality (\ref{2.10}).
We may achieve this
by modifying the right hand side of formula
\reff{3.1}  in Definition \ref{dfn3.1}
and use a sequence of functions
$G_n(\hat{\mathbb S})$ satisfying the
Assumption \ref{asm2.1}
and   $G_n\downarrow G$
 as $n$ approaches to $\infty$.
 Under this structure,  we
 skip Lemma \ref{l.1}, and  prove Lemma
  \ref{lem3.1} directly.
  The final step would
  be a modification of Proposition \ref{lem4.2}
  to the following claim
  $$
\limsup_{N\rightarrow\infty}\
\sup_{\mathbb Q\in\mathbb{M}^{(K)}_N}\ \mathbb{E}_{\mathbb Q}
\left[G_n (\hat{\mathbb S}) \right] \ \leq \
\sup_{\Q \in\mathbb{M}_\mu} \ \mathbb{E}_{\Q}\left[ G(\mathbb S)
 \right].
$$

 This extension technique also applies to Barrier options.
 In this case, we use the approximating sequence
 as the payoffs $G_n$ of
 Barrier options with a larger (than the
 original payoff $G$) corridor.
 The main concern here is 
 to discretize the process in a way
 adapted to the barriers.
 
 Two other  important extensions
are to the case of many stocks
and the inclusion of the possibility
of jumps into the stock price process.
 We believe that for the multi-dimensional case,
 a  discretization based
 proof would be possible.
The main difficulty here is 
to appropriately define the crossing times
and use them to obtain
a piece-wise constant 
approximation of a generic 
stock price process.

\end{document}